\numberwithin{equation}{section}
\theoremstyle{plain}
\newtheorem{thm}{Theorem}[section]
\newtheorem{lm}[thm]{Lemma}
\newtheorem{cor}[thm]{Corollary}
\newtheorem{prop}[thm]{Proposition}
\theoremstyle{remark}
\theoremstyle{definition}
\newcommand{\bnu}{\begin{enumerate}}
\newcommand{\enu}{\end{enumerate}}
\newcommand{\q}{\quad}
\newcommand{\qq}{\qquad}
\newcommand{\be}{\beta}
\newcommand{\ga}{\gamma}
\newcommand{\Ga}{\Gamma}
\newcommand{\Om}{\Omega}
\newcommand{\la}{\lambda}
\newcommand{\ep}{\epsilon}
\newcommand{\si}{\sigma}
\newcommand{\vp}{\varphi}
\newcommand{\bbz}{\mathbb{Z}}
\newcommand{\bbzn}{\mathbb Z^n}
\newcommand{\bbs}{\mathbb S}
\newcommand{\bbr}{\mathbb{R}}
\newcommand{\bbrn}{\mathbb R^n}
\newcommand{\rn}{\mathbb{R}^n}
\newcommand{\bbn}{\mathbb{N}}
\newcommand{\UU}{\mathcal{U}}
\newcommand{\II}{\mathcal{I}}
\newcommand{\JJ}{\mathcal{J}}
\newcommand{\TT}{\mathcal{T}}
\newcommand{\LL}{\mathcal{L}}
\newcommand{\kkk}{\vec{{k}}\,}
\newcommand{\xxxi}{\vec{{\xi}}\,}
\newcommand{\xxx}{\vec{{x}}\,}
\newcommand{\yyy}{\vec{{y}}\,}
\newcommand{\zzz}{\vec{{z}}\,}
\newcommand{\GGG}{\vec{{G}}}
\newcommand{\0}{\vec{\boldsymbol{0}}}
\newcommand{\f}{\frac}
\newcommand{\nf}{\infty}
\newcommand{\wh}{\widehat}
\newcounter{question}
\newcommand{\bpf}{\begin{proof}}
\newcommand{\epf}{\end{proof}}
\begin{document}

\author{Loukas Grafakos}
\address{L. Grafakos, Department of Mathematics, University of Missouri, Columbia, MO 65211, USA} 
\email{grafakosl@missouri.edu}

\author{Danqing He}
\address{D. He, School of Mathematical Sciences,
Fudan University, People's Republic of China}
\email{hedanqing@fudan.edu.cn}

\author{Petr Honz\'ik}
\address{P. Honz\'ik, Department of Mathematics,
Charles University, 116 36 Praha 1, Czech Republic}
\email{honzik@gmail.com}

\author{Bae Jun Park}
\address{B. Park, Department of Mathematics, Sungkyunkwan University, Suwon 16419, Republic of Korea}
\email{bpark43@skku.edu}

\thanks{L. Grafakos would like to acknowledge the support of  the Simons Foundation grant 624733. 
D. He is supported by  National Key R$\&$D Program of China (No. 2021YFA1002500), NNSF of China (No. 11701583, No. 12161141014),  and Natural Science Foundation of Shanghai (No. 22ZR1404900).
B. Park is supported in part by NRF grant 2019R1F1A1044075 and was supported in part by a KIAS Individual Grant MG070001 at the Korea Institute for Advanced Study} 

 \title{On pointwise a.e. convergence of multilinear operators} 
\subjclass[2010]{Primary 42B15, 42B25}
\keywords{Multilinear operators, Rough Singular integral operator, Lacunary maximal multiplier}

\begin{abstract} 
In this work we obtain the pointwise  almost everywhere 
convergence for two families of multilinear operators: (a) 
truncated homogeneous singular integral  operators 
 associated with $L^q$ functions on the sphere and  (b) 
 lacunary  multiplier operators of limited decay. 
 The a.e. convergence is
deduced from the $L^2\times\cdots\times L^2\to L^{2/m}$ boundedness  
of the associated maximal multilinear operators.   
\end{abstract}

\maketitle

%\tableofcontents

\section{Introduction and Preliminaries}

The pointwise a.e. convergence of sequences of operators is of paramount importance   and has been widely studied in several areas of analysis, such as harmonic analysis, PDE, and ergodic theory.  This area boasts challenging  problems,  
indicatively see     \cite{CRV, C66, DGL, JMZ}, 
and is intimately connected with the boundedness of the associated maximal operators; on this see \cite{Stein}. 
Moreover, techniques and tools employed to study a.e. convergence have led to important developments in harmonic analysis. 

Multilinear harmonic analysis has made significant advances in recent years. The founders of this area are  Coifman and Meyer \cite{CM2} who   realized the 
applicability  of multilinear operators  and  introduced their study  in analysis  in the 
mid 1970s. 
Focusing on  operators that commute with translations, a
fundamental difference between the   multilinear  and the  linear  theory  is the existence of a straightforward characterization of boundedness at an initial point, usually $L^2\to L^2$. The lack of an easy characterization of boundedness at an initial point in the multilinear theory creates difficulties in their study. 
Criteria that get very close to characterization of boundedness have   
recently been obtained by  the first two authors and Slav\'ikov\'a \cite{Gr_He_Sl}
and also  by Kato, Miyachi, and Tomita \cite{Katoetal}  in the  bilinear case.  
These criteria were extended to the general $m$-linear case for $m\ge 2$ by the authors of this article in \cite{Paper1}. This reference also contains initial 
$L^2\times\cdots\times L^2\to L^{2/m}$ estimates for  
rough homogeneous multilinear singular integrals associated with 
$L^q$ functions on the sphere and multilinear multipliers of H\"ormander type.  

 The purpose of this work is to obtain the pointwise a.e. convergence of truncated multilinear homogeneous singular integrals and lacunary multilinear multipliers by establishing   boundedness for their associated maximal operators.

We first   introduce multilinear truncated singular integral operators.
Let $\Omega$ be a integrable function, defined on the sphere $\mathbb{S}^{mn-1}$, satisfying the mean value zero property 
\begin{equation}\label{vanishingmtcondition}
\int_{\mathbb{S}^{mn-1}}\Omega~ d\sigma_{mn-1}=0.
\end{equation} Then we define
\begin{equation*}
K(\yyy):=\frac{\Omega(\yyy')}{|\yyy|^{mn}}, \qquad \vec y \neq 0, 
\end{equation*} 
where $\yyy':=\yyy/|\yyy|\in \mathbb{S}^{mn-1}$, and the 
corresponding truncated multilinear operator $\LL_{\Om}^{(\epsilon)}$ by
$$
\mathcal L^{(\ep)}_{\Om}\big(f_1,\dots,f_m\big)(x):=\int_{(\bbrn)^m\setminus B(0,\epsilon)}{K(\yyy)\prod_{j=1}^{m}f_j(x-y_j)}~d\,\yyy
$$
 acting on Schwartz functions $f_1,\dots,f_m$ on $\bbrn$, where $x\in\bbrn$ and $\yyy:=(y_1,\dots,y_m)\in (\bbrn)^m$.
Moreover, by taking $\epsilon\searrow 0$, we obtain the multilinear homogeneous Calder\'on-Zygmund singular integral operator 
\begin{align}
\mathcal L_{\Om}\big(f_1,\dots,f_m\big)(x)&:=\lim_{\epsilon\searrow 0}\mathcal L^{(\ep)}_{\Om}\big(f_1,\dots,f_m\big)(x) \label{epsilonto0}\\
&=p.v. \int_{(\bbrn)^m}{K(\yyy)\prod_{j=1}^{m}f_j(x-y_j)}~d\,\yyy . \nonumber
\end{align}
This is still well-defined for any Schwartz functions $f_1, \dots,f_m$ on $\bbrn$. Here,   $B(0,\epsilon)$ is  the ball centered at zero with radius $\epsilon>0$ in $(\bbrn)^m$.  
   In \cite{Paper1} we   showed that 
   if $\Om$ lies in $ L^q
(\mathbb{S}^{mn-1}) $ with $q>\f{2m}{m+1}$, then the multilinear singular integral operator $\LL_{\Om}$ admits a  bounded 
extension from $L^2(\bbrn)\times \cdots \times L^2(\bbrn)$ to $L^{2/m}(\bbrn)$. In order words, given $f_j\in L^2(\bbrn)$, $\LL_{\Omega}(f_1,\dots,f_m)$ is well-defined and is in $L^{2/m}(\bbrn)$. It is natural to expect that, similar to the linear case, the truncated operator $\LL_{\Omega}^{(\epsilon)}(f_1,\dots,f_m)$ converges a.e. to $\LL_{\Omega}(f_1,\dots,f_m)$ as $\epsilon\to 0$.

%satisfies     \eqref{maxinequal} and thus admits a  bounded extension from $L^2(\bbrn)\times \cdots \times L^2(\bbrn)$ to $L^{2/m}(\bbrn)$ by density.

 %The truncated multilinear operator $\LL_{\Omega}^{(\epsilon)}$ is actually well-defined for functions  $f_1\in L^{p_1}(\bbrn),\dots,f_m\in L^{p_m}(\bbrn)$ where $1\le p_1,\dots,p_m<\infty$; this may be seen by H\"older's inequality, but  the singularity  of the kernel $K$ causes   difficulties in establishing the pointwise convergence in \eqref{epsilonto0}. Therefore, it is not   guaranteed that  $\LL_{\Om}(f_1,\dots,f_m)$ is well-defined, i.e., it exists as a limit for almost all $x\in \bbrn$, when $f_1\in L^{p_1}(\bbrn),\dots,f_m \in L^{p_m}(\bbrn)$.

Our first main result is as follows.
\begin{thm}\label{CCC1'}
Let $m\ge 2$, $\frac{2m}{m+1}<q\le \infty$ and $\Omega \in L^q (\mathbb S^{mn-1})$     satisfy \eqref{vanishingmtcondition}.
%Let $\Om$ be as in Theorem~\ref{MAXSINGINT} and l
%Let
%\begin{equation*}
%\LL_{\Om}^*\big(f_1,\dots,f_m\big)(x):=\sup_{\epsilon>0} 
%\big| \LL_{\Om}^\epsilon(f_1,\dots, f_m)(x) \big|.
%\end{equation*}
%where 
%\[
%\LL_{\Om}^\epsilon(f_1,\dots, f_m)(x)=
% \int_{(\bbrn)^m\setminus B(0,\epsilon)}{K(\yyy)\prod_{j=1}^{m}f_j(x-y_j)}~d\,\yyy. 
%\]
Then  the  truncated singular integral  
$\LL_{\Om}^{(\epsilon)}(f_1,\dots, f_m)$ converges a.e. as $\epsilon\to 0$ when $f_j\in L^2(\rn)$, $j=1,\dots , m$.
That is, the multilinear singular integral $\LL_{\Omega}(f_1,\dots,f_m)$ is well-defined a.e. when $f_j\in L^2(\rn)$, $j=1,\dots , m$.
%Then for $f_j\in L^2(\rn)$, $j=1,\dots , m$, the  truncated singular integrals  
%$\LL_{\Om}^\epsilon(f_1,\dots, f_m)$ converge a.e. as $\epsilon\to 0$ to  
%$ {\LL_{\Om}}(f_1,\dots, f_m)$, where  $ {\LL_{\Om}}$ denotes here the bounded  extension of $\LL_{\Om}$ on $L^2\times \cdots\times L^2.$
\end{thm}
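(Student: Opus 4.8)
The plan is to follow the classical Calder\'on--Zygmund template for a.e. convergence: reduce the convergence statement to an $L^{2/m}$ (or weak-type) bound for the associated maximal truncated operator
\[
\mathcal{L}_{\Om}^{*}(f_1,\dots,f_m)(x):=\sup_{\ep>0}\big|\mathcal{L}_{\Om}^{(\ep)}(f_1,\dots,f_m)(x)\big|.
\]
Indeed, once one knows that $\mathcal{L}_{\Om}^{*}$ maps $L^2\times\cdots\times L^2$ into $L^{2/m,\infty}$, the standard argument applies: on the dense subclass of $m$-tuples of Schwartz functions the limit in \eqref{epsilonto0} exists everywhere (and equals $\mathcal{L}_{\Om}(f_1,\dots,f_m)$, which is well-defined by \cite{Paper1}), so for general $f_j\in L^2$ one approximates each $f_j$ by Schwartz functions $f_j^{(k)}$, writes the oscillation
\[
\Lambda(f_1,\dots,f_m):=\limsup_{\ep\to0}\mathcal{L}_{\Om}^{(\ep)}(\vec f)-\liminf_{\ep\to0}\mathcal{L}_{\Om}^{(\ep)}(\vec f),
\]
and controls it by a finite sum of terms, each involving $\mathcal{L}_{\Om}^{*}$ applied to a tuple in which at least one entry is $f_j-f_j^{(k)}$, plus the oscillation of the Schwartz tuple (which is zero). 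Multilinearity of the truncations lets one telescope $\mathcal{L}_{\Om}^{(\ep)}(f_1,\dots,f_m)-\mathcal{L}_{\Om}^{(\ep)}(f_1^{(k)},\dots,f_m^{(k)})$ into such a sum; the weak $L^{2/m}$ bound for $\mathcal{L}_{\Om}^{*}$ together with $\|f_j-f_j^{(k)}\|_{L^2}\to0$ forces $\|\Lambda(\vec f)\|_{L^{2/m,\infty}}=0$, hence $\Lambda(\vec f)=0$ a.e., i.e. the limit exists a.e.

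So the real content is the maximal bound, and this is where the work lies. I would establish
\[
\big\|\mathcal{L}_{\Om}^{*}(f_1,\dots,f_m)\big\|_{L^{2/m}}\ \lesssim\ \prod_{j=1}^{m}\|f_j\|_{L^2}
\]
(or the weak-type version, which suffices) for $\Om\in L^q(\mathbb{S}^{mn-1})$ with $q>\tfrac{2m}{m+1}$. The natural route is a Cotlar-type inequality: pointwise dominate $\mathcal{L}_{\Om}^{*}(\vec f)(x)$ by a constant times $M_{\delta}\big(\mathcal{L}_{\Om}(\vec f)\big)(x)$ plus products of Hardy--Littlewood maximal functions $\prod_j \mathcal{M}f_j(x)$ (for a suitable small $\delta<2/m$), using the already-known strong $L^2\times\cdots\times L^2\to L^{2/m}$ boundedness of the untruncated $\mathcal{L}_{\Om}$ from \cite{Paper1} together with the regularity/size properties of the kernel $K(\yyy)=\Om(\yyy')/|\yyy|^{mn}$. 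Concretely, for fixed $\ep$ one splits $\mathcal{L}_{\Om}^{(\ep)}(\vec f)(x)=\mathcal{L}_{\Om}\big((f_1\boo_{B(x,\ep)^c}\text{-type pieces})\big)$ and compares $\mathcal{L}_{\Om}^{(\ep)}(\vec f)(x)$ with the average of $\mathcal{L}_{\Om}(\vec f)$ over a ball of radius $\sim\ep$ around $x$; the difference is handled by the smoothness of $K$ away from the origin and by the tail decay, producing the maximal-function terms. Then the $L^{2/m}$ norm of the right-hand side is controlled since $M_\delta$ is bounded on $L^{2/m}$ for $\delta<2/m$ and $\prod_j\mathcal{M}f_j$ obeys the trivial $L^2\times\cdots\times L^2\to L^{2/m}$ estimate via H\"older and the $L^2$-boundedness of $\mathcal{M}$.

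The main obstacle is the Cotlar inequality itself in the rough multilinear setting: unlike the linear smooth case, here $\Om$ is merely $L^q$ on the sphere, so the kernel has no pointwise smoothness, and the comparison between a truncated value and a ball-average of the full singular integral must instead exploit the $L^q$ (not $L^\infty$) structure of $\Om$ — typically by further decomposing $\Om$ into its $L^\infty$ part and a tail handled by an $L^q\to L^r$ bound, or by using the a priori $L^2\times\cdots\times L^2\to L^{2/m}$ bound as a black box on truncated kernels $K\boo_{|\yyy|>\ep}$ with constants uniform in $\ep$. One must check that the constant in the boundedness of $\mathcal L_{K\boo_{|\yyy|>\ep}}$ does not blow up as $\ep\to0$, which follows because rescaling $\yyy\mapsto\ep\yyy$ maps the truncated kernel at scale $\ep$ to the one at scale $1$ while preserving $\|\Om\|_{L^q}$ and the homogeneity, so a single scale-invariant estimate suffices; combined with translation invariance this yields the needed control of the "local" error terms. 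Assembling these pieces gives the weak-type maximal bound, and the density/oscillation argument above then completes the proof of Theorem~\ref{CCC1'}.
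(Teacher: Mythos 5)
Your reduction to the boundedness of the maximal truncated operator $\mathcal{L}_{\Om}^{*}$ via a density/oscillation argument is exactly the paper's reduction (Proposition~\ref{prop00'}): on Schwartz tuples the truncations converge everywhere, and a weak $L^{2/m}$ bound for $\mathcal{L}_{\Om}^{*}$ upgrades this to a.e.\ convergence for $L^2$ data. That part is fine.

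The gap is in your proposed proof of the maximal bound itself. You suggest a Cotlar-type pointwise inequality, comparing $\mathcal{L}_{\Om}^{(\ep)}(\vec f)(x)$ with an average of the full $\mathcal{L}_{\Om}(\vec f)$ near $x$ and absorbing the error into Hardy--Littlewood maximal functions. This requires a smoothness (H\"ormander/Dini) estimate on the kernel away from the diagonal, and for $\Om$ merely in $L^q(\mathbb{S}^{mn-1})$, $q<\infty$, the kernel $K(\yyy)=\Om(\yyy')/|\yyy|^{mn}$ has no such pointwise regularity; the comparison step simply does not close. Your two proposed workarounds do not repair this: decomposing $\Om$ into an $L^\infty$ head plus a small-$L^1$ tail gives good control of maximal \emph{averaging} operators (this is essentially what Lemma~\ref{AUX} of the paper proves for $\mathcal{M}_\Om$), but it does not produce the kernel cancellation/smoothness that a Cotlar comparison needs; and invoking the a priori $L^2\times\cdots\times L^2\to L^{2/m}$ bound for each fixed truncation $K\chi_{|\yyy|>\ep}$ with a scale-invariant constant only shows $\sup_\ep\|\mathcal{L}_{\Om}^{(\ep)}(\vec f)\|_{L^{2/m}}\lesssim\prod_j\|f_j\|_{L^2}$, which is much weaker than a bound on $\|\sup_\ep|\mathcal{L}_{\Om}^{(\ep)}(\vec f)|\|_{L^{2/m}}$.

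The paper takes a genuinely different route to the maximal bound (Theorem~\ref{MAXSINGINT}). It first replaces the continuous supremum over $\ep$ by a maximal averaging operator $\mathcal{M}_\Om$ (handled by Lemma~\ref{AUX}) plus a dyadic maximal sum $\mathcal{L}_{\Om}^{\sharp}=\sup_\tau|\sum_{\gamma<\tau}\cdots|$. It then performs the Duoandikoetxea--Rubio de Francia frequency decomposition $K=\sum_\mu K_\mu$. The low-frequency part $\widetilde{\mathcal{L}}_{\Om}^\sharp$ is treated by Coifman--Meyer/Calder\'on--Zygmund theory; a Cotlar inequality \emph{is} used there, but only for the smooth kernel $\widetilde K$ whose Fourier transform satisfies Mihlin-type bounds, where it is legitimate. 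The high-frequency pieces $\mathcal{L}_{\Om,\mu}^\sharp$ are where the real difficulty sits, and they are handled by the wavelet decomposition of $\widehat{K_\mu^0}$, the Plancherel-type estimates of Proposition~\ref{keyapplication1}, and a Hardy-space/Littlewood--Paley argument to control the lacunary supremum, yielding geometric decay $2^{-\ep_0\mu}$ that can be summed. In short: your global reduction is the right one, but the heart of the argument --- a summable maximal estimate for the rough kernel --- cannot be obtained by Cotlar's inequality in this setting and requires the frequency/wavelet machinery of the paper.
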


To achieve this goal, we need the following result from  \cite{GDOS}.
\begin{prop}\label{prop00'}  
Let $0<p_j \le\infty$, $1\le j\le m$, $0<q<\infty$ and let $D_j$ be a dense subclass of $L^{p_j}(\bbrn)$. 
Let $\{T_t\}_{t>0}$  be a 
family of $m$-linear operators while $T_\ast$   is the associated maximal operator, defined by
$$T_{\ast}(f_1,\dots,f_m):=\sup_{t>0} \big|T_t(f_1,\dots,f_m)\big| $$
for $f_j\in D_j$, $1\le j\le m$.  Suppose that there is a constant $B$ such that 
\begin{equation}\label{100'}
\|T_{\ast} (f_1,\dots,f_m)\|_{L^{q,\infty}(\bbrn)} \leq B \prod_{j=1}^m \|f_j\|_{L^{p_j}(\bbrn)}
 \end{equation}
 for all $f_j\in D_j(\rn)$. Also suppose  that for all $f_j $ in $D_j$ we have 
\begin{equation}\label{tto1'}
 \lim_{t\to 0} T_{t} (f_1,\dots, f_m) = T(f_1,\dots,f_m)
 \end{equation}
exists and is finite. 
Then for all functions $f_j\in L^{p_j}(\rn) $ the limit in \eqref{tto1'} exists and is finite  a.e., and defines an $m$-linear operator   which uniquely extends $T$ defined on $D_1\times \cdots \times D_m$ and
which is bounded from $L^{p_1}(\bbrn)\times \cdots \times L^{p_m}(\bbrn)$ to $L^{q,\infty}(\rn)$. 
\end{prop}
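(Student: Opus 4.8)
The plan is to run the classical argument that a maximal bound together with convergence on a dense class forces a.e.\ convergence (Banach's principle, in the form used by Stein for sequences of operators), adapted to the multilinear setting. The only genuinely multilinear ingredient is the telescoping identity
\[
T_s(f_1,\dots,f_m)-T_s(g_1,\dots,g_m)=\sum_{k=1}^m T_s\big(g_1,\dots,g_{k-1},\,f_k-g_k,\,f_{k+1},\dots,f_m\big),
\]
which rewrites a multilinear difference as a sum of $m$ terms, each with exactly one ``small'' entry, so that the \emph{single} bound \eqref{100'} applies to each summand. \textbf{Step 1 (extension).} By \eqref{100'}, each $T_t$ (hence $\le T_\ast$) and $T_\ast$ itself is a bounded $m$-linear map from $D_1\times\cdots\times D_m$ into the complete quasi-normed space $L^{q,\infty}(\rn)$ with constant at most $B$, so each extends by density to $L^{p_1}(\rn)\times\cdots\times L^{p_m}(\rn)$ with the same bound. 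A short diagonalization — aligning a.e.-convergent subsequences, using some regularity in $t$ (that $t\mapsto T_t(f_1,\dots,f_m)$ be continuous, or that $\sup_{t>0}$ reduces to a supremum over a countable dense set), which holds in the intended applications — produces one measurable majorant $\mathcal M(f_1,\dots,f_m)$ with $\sup_{t>0}|T_t(f_1,\dots,f_m)|\le \mathcal M(f_1,\dots,f_m)$ a.e.\ and $\|\mathcal M(f_1,\dots,f_m)\|_{L^{q,\infty}(\rn)}\le B\prod_{j=1}^m\|f_j\|_{L^{p_j}(\rn)}$.

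\textbf{Step 2 (reduction to the oscillation).} For $f_j\in L^{p_j}(\rn)$ set
\[
\mathcal O(f_1,\dots,f_m)(x):=\limsup_{s,t\to 0^+}\big|T_s(f_1,\dots,f_m)(x)-T_t(f_1,\dots,f_m)(x)\big|.
\]
If $\mathcal O(f_1,\dots,f_m)=0$ a.e., then for a.e.\ $x$ the net $t\mapsto T_t(f_1,\dots,f_m)(x)$ is Cauchy as $t\to0$ and hence converges to a finite value $T(f_1,\dots,f_m)(x)$; since $|T(f_1,\dots,f_m)|\le\mathcal M(f_1,\dots,f_m)$ a.e., the limit operator is bounded from $L^{p_1}\times\cdots\times L^{p_m}$ to $L^{q,\infty}$ with constant $B$, and multilinearity together with the density of the $D_j$ and the continuity of bounded multilinear maps show it is the unique such extension of the operator in \eqref{tto1'}. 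Thus it suffices to prove $\mathcal O(f_1,\dots,f_m)=0$ a.e.

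\textbf{Step 3 (the oscillation vanishes).} Fix $\lambda>0$ and $0<\delta\le1$, and choose $g_j\in D_j$ with $\|f_j-g_j\|_{L^{p_j}}\le\delta$. Applying the triangle inequality through $(g_1,\dots,g_m)$, then the telescoping identity to the first and third resulting differences, and using that $\limsup_{s,t\to0^+}|T_s(g_1,\dots,g_m)(x)-T_t(g_1,\dots,g_m)(x)|=0$ by \eqref{tto1'} (as each $g_j\in D_j$), one gets
\[
\mathcal O(f_1,\dots,f_m)(x)\le 2\sum_{k=1}^m \mathcal M\big(g_1,\dots,g_{k-1},f_k-g_k,f_{k+1},\dots,f_m\big)(x).
\]
Since $\{2\sum_{k=1}^m a_k>\lambda\}\subseteq\bigcup_{k=1}^m\{a_k>\lambda/(2m)\}$, the weak-type estimate of Step 1 yields
\[
\big|\{x:\mathcal O(f_1,\dots,f_m)(x)>\lambda\}\big|\le\sum_{k=1}^m\Big(\frac{2mB}{\lambda}\Big)^q\|f_k-g_k\|_{L^{p_k}}^q\prod_{j\ne k}\big(\|f_j\|_{L^{p_j}}+1\big)^q\le C\,\delta^q,
\]
where $C=C(m,B,\lambda,f_1,\dots,f_m)$ is independent of $\delta$. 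Letting $\delta\to0$ gives $|\{\mathcal O(f_1,\dots,f_m)>\lambda\}|=0$ for every $\lambda>0$, hence $\mathcal O(f_1,\dots,f_m)=0$ a.e., completing the proof.

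\textbf{Main obstacle.} The arithmetic in Step 3 is routine; the point requiring care is Step 1 — producing a \emph{single} majorant $\mathcal M(f_1,\dots,f_m)$ that dominates all the $T_t(f_1,\dots,f_m)$, $t>0$, on general $L^{p_j}$ inputs while retaining the weak-type bound (the supremum being over an uncountable parameter set), together with the verification that density extension of multilinear operators is valid into the quasi-Banach space $L^{q,\infty}$ when $q<1$.
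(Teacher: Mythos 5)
The paper does not prove Proposition~\ref{prop00'}; it is imported from \cite{GDOS}, so there is no in-paper proof to compare against. Your argument is the standard Banach--Stein a.e.\ convergence principle adapted to the multilinear setting by telescoping, which is surely the route in the cited source. Steps~2 and~3 are correct as written: the telescoping identity, the reduction to the oscillation $\mathcal O$, the cheap union bound $\{2\sum_k a_k>\lambda\}\subseteq\bigcup_k\{a_k>\lambda/(2m)\}$, and the $\delta\to0$ limit all check out, and the Fatou-type argument for $L^{q,\infty}$ transfers the weak bound to the a.e.\ limit $T$.

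The one statement that needs to be corrected is in Step~1, where you assert that ``$T_\ast$ itself is a bounded $m$-linear map \dots\ so [it] extends by density.'' The maximal operator $T_\ast=\sup_{t>0}|T_t|$ is not $m$-linear, only subadditive in each slot, and sublinear operators are \emph{not} determined by their values on a dense subclass, so density extension cannot be applied to $T_\ast$ directly. The correct order of operations is: extend each genuinely $m$-linear $T_t$ by density (legitimate, since $|T_t|\le T_\ast$ gives $\|T_t(f)\|_{L^{q,\infty}}\le B\prod_j\|f_j\|_{L^{p_j}}$ on $D_1\times\cdots\times D_m$, and a bounded multilinear map into a complete quasi-normed space has a unique bounded extension), and \emph{then} set $\mathcal M(f_1,\dots,f_m):=\sup_{t>0}\big|T_t^{\mathrm{ext}}(f_1,\dots,f_m)\big|$. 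To see that this $\mathcal M$ inherits the weak bound on general inputs, restrict the supremum to a countable dense set of $t$, choose $g_j^{(n)}\in D_j$ with $g_j^{(n)}\to f_j$ in $L^{p_j}$, extract by diagonalization a subsequence along which $T_t^{\mathrm{ext}}(g^{(n)})\to T_t^{\mathrm{ext}}(f)$ a.e.\ for every $t$ in that countable set, and apply Fatou for $L^{q,\infty}$. Passing from the countable supremum to all $t>0$ needs some regularity of $t\mapsto T_t(f)(x)$ (or one simply states the a.e.\ convergence along sequences, which is all that the paper's applications use: the lacunary parameter $\nu\in\bbz$ is already discrete, and the truncated kernels depend continuously on $\epsilon$). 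You flag exactly this point as the main obstacle, so the diagnosis is right; the phrase ``$T_\ast$ is $m$-linear'' should simply be replaced by the extend-each-$T_t$-then-take-sup construction. With that repair, Steps~2 and~3 go through unchanged and the proof is sound.
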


%In this work we  continue this study, focusing on two kinds of 
%maximal  multilinear operators:   maximal    
%homogeneous singular integral  operators associated 
%with $L^q$ functions on the sphere and lacunary maximal operators associated with multipliers of limited decay. 
 
 %%%%%%%%%%%%%%%%%%%%%%%

 With the help of this result, we reduce Theorem~\ref{CCC1'} to the boundedness of 
%In this work we obtain a  related result concerning 
the  associated maximal singular integral operator 
\begin{equation*}
\LL_{\Om}^*\big(f_1,\dots,f_m\big)(x):=\sup_{\epsilon>0}\Big| \LL_{\Omega}^{(\epsilon)}\big(f_1,\dots,f_m\big)(x)\Big|.
%\bigg| \int_{(\bbrn)^m\setminus B(0,\epsilon)}{K(\yyy)\prod_{j=1}^{m}f_j(x-y_j)}~d\,\yyy \,\bigg|.
\end{equation*}
%Our first   result is the following analogue of the boundedness of $\LL_{\Om}$. 
\begin{thm}\label{MAXSINGINT}
Let $m\ge 2$, $\frac{2m}{m+1}<q\le \infty$ and $\Omega \in L^q (\mathbb S^{mn-1})$     satisfy \eqref{vanishingmtcondition}. Then there exists a constant $C>0$ such that
\begin{equation}\label{maxinequal}
\big\Vert \LL_{\Om}^*(f_1,\dots,f_m)\big\Vert_{L^{2/m}(\bbrn)}\le C\Vert \Om\Vert_{L^q(\mathbb{S}^{mn-1})}\prod_{j=1}^{m}\Vert f_j\Vert_{L^2(\bbrn)}
\end{equation}
for Schwartz functions $f_1,\dots,f_m$ on $\bbrn$.
\end{thm}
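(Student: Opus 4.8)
\emph{Plan of proof.} The strategy is to re-run the proof of the non‑maximal estimate $\mathcal L_\Omega\colon L^2\times\cdots\times L^2\to L^{2/m}$ from \cite{Paper1}, carrying the supremum over truncations through the argument; as is typical for maximally truncated rough operators, the one genuinely new difficulty is the supremum itself, here aggravated by the fact that the target $L^{2/m}$ is only quasi‑Banach when $m\ge 2$. \emph{Step 1 (reduction and linearization).} Fix $\Phi\in C^\infty[0,\infty)$ with $\Phi\equiv 0$ on $[0,1]$ and $\Phi\equiv 1$ on $[2,\infty)$. For each $\epsilon>0$ we have $\mathcal L_\Omega^{(\epsilon)}(\vec f)=\mathcal L_\Omega^{\Phi,\epsilon}(\vec f)+\mathcal R_\epsilon(\vec f)$, where $\mathcal L_\Omega^{\Phi,\epsilon}(\vec f)(x):=\int_{(\bbrn)^m}K(\yyy)\,\Phi(|\yyy|/\epsilon)\prod_{\ell=1}^m f_\ell(x-y_\ell)\,d\yyy$ and $\mathcal R_\epsilon$ is the multilinear convolution against $K(\yyy)\bigl[\mathbf 1_{\{|\yyy|>\epsilon\}}-\Phi(|\yyy|/\epsilon)\bigr]$; this last kernel is supported in $\{\epsilon<|\yyy|<2\epsilon\}$, is dominated there by $|K|$, and has vanishing integral by \eqref{vanishingmtcondition}. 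Hence $\LL_\Om^*(\vec f)\le\sup_{\epsilon>0}|\mathcal L_\Omega^{\Phi,\epsilon}(\vec f)|+\sup_{\epsilon>0}|\mathcal R_\epsilon(\vec f)|$. By a routine passage from a continuous to a lacunary supremum (comparing $\epsilon\in[2^N,2^{N+1})$ with $\epsilon=2^N$ at the cost of one more mean‑zero single‑scale piece) it suffices to take $\epsilon=2^N$, $N\in\bbz$, and then, by monotone exhaustion, to bound the linearized operators $\vec f\mapsto\mathcal L_\Omega^{\Phi,2^{N(\cdot)}}(\vec f)(\cdot)$ and $\vec f\mapsto\mathcal R_{2^{N(\cdot)}}(\vec f)(\cdot)$ in $L^{2/m}$ uniformly over measurable $N\colon\bbrn\to\bbz$. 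Writing $K\,\Phi(|\cdot|/2^N)=\sum_{j\ge N}K_j$ for the standard smooth annular pieces $K_j$ of $K$ (with $K_j$ supported in $\{|\yyy|\sim 2^j\}$), the first linearized operator equals $\sum_{j\ge N(x)}\mathcal L_{K_j}(\vec f)(x)=\mathcal L_\Omega(\vec f)(x)-\sum_{j<N(x)}\mathcal L_{K_j}(\vec f)(x)$; since $\mathcal L_\Omega$ is bounded from $L^2\times\cdots\times L^2$ to $L^{2/m}$ by \cite{Paper1}, everything is reduced to bounding the maximal partial sum $\sup_N\bigl|\sum_{j<N}\mathcal L_{K_j}(\vec f)\bigr|$ together with the remainder.

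\emph{Step 2 (frequency decomposition with square functions).} Decompose each $K_j$ further in the frequency variables — and $\Omega$ in spherical‑harmonic degree — exactly as in \cite{Paper1}, $K_j=\sum_{i\ge 0}K_j^i$, so that $\mathcal L_{K_j^i}(\vec f)$ is frequency‑localized, in an appropriate input variable, at scale $2^{i-j}$. The analysis of \cite{Paper1} provides, for some $\delta=\delta(m,n,q)>0$ — positive precisely because $q>\frac{2m}{m+1}$ — and some $A<\infty$, the vector‑valued estimate
\[
\bigl\|\bigl(\textstyle\sum_{j\in\bbz}|\mathcal L_{K_j^i}(f_1,\dots,f_m)|^2\bigr)^{1/2}\bigr\|_{L^{2/m}(\bbrn)}\le C\,2^{-\delta i}(1+i)^A\,\|\Omega\|_{L^q(\bbs^{mn-1})}\prod_{\ell=1}^m\|f_\ell\|_{L^2(\bbrn)},
\]
together with its analogue for the single‑scale pieces $\mathcal R_{2^N}^i$ of the remainder, the square function now running over the scale index $N$. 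The point is that the proof of the non‑maximal theorem already proceeds by summing a square function in the annular parameter (the annuli being essentially orthogonal on account of the frequency separation), so this vector‑valued bound is what that proof really yields; verifying that it does, with the geometric decay $2^{-\delta i}$ uniform down to the critical exponent, is part of the work.

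\emph{Step 3 (the supremum, and the main obstacle).} For the remainder the estimate is now immediate: $\sup_N|\mathcal R_{2^N}^i(\vec f)|\le\bigl(\sum_N|\mathcal R_{2^N}^i(\vec f)|^2\bigr)^{1/2}$, whose $L^{2/m}$ norm is controlled by Step 2, and one sums over $i\ge 0$ using the $\ell^{2/m}$‑subadditivity of $\|\cdot\|_{L^{2/m}}^{2/m}$. For the maximal partial sum, fix $i$ and insert the fattened frequency projections $P_j$ at scale $2^{i-j}$ that fix $\mathcal L_{K_j^i}(\vec f)$, so that $\sum_{j<N}\mathcal L_{K_j^i}(\vec f)=\sum_{j<N}\mathcal L_{K_j^i}(P_jf_1,\dots,P_jf_m)$ and therefore $\sup_N\bigl|\sum_{j<N}\mathcal L_{K_j^i}(\vec f)\bigr|\le\sum_{j\in\bbz}\bigl|\mathcal L_{K_j^i}(P_jf_1,\dots,P_jf_m)\bigr|$; combining the per‑$j$ bounds for $\mathcal L_{K_j^i}$ — a pointwise ``$M_s$‑type'' domination ($1\le s<2$) for the Coifman--Meyer‑type part and the $L^2$‑operator bound carrying the factor $2^{-\delta i}$ for the rough part, reconciled by interpolation — with the generalized Hölder inequality in the index $j$ ($m$ factors) and the Fefferman--Stein vector‑valued maximal inequality on $L^2$, one reaches a bound of the form $C\,2^{-\delta' i}(1+i)^{A'}\,\|\Omega\|_{L^q}\prod_\ell\|f_\ell\|_{L^2}$, which one sums over $i$. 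The crux — and the reason Theorem~\ref{MAXSINGINT} does not follow formally from the non‑maximal theorem — is that $L^{2/m}$ is quasi‑Banach for $m\ge 2$, so the Hardy--Littlewood maximal operator, the standard device for maximally truncated singular integrals, is unavailable on the target and cannot tame the supremum directly; one must instead route the supremum through square functions and back onto the $L^2$ inputs, where $M$ is bounded, which is exactly what forces the vector‑valued strengthening of the \cite{Paper1} estimates with geometric decay sustained all the way to $q>\frac{2m}{m+1}$. Establishing and then correctly summing those refined estimates — in particular reconciling the pointwise control of the smooth pieces with the merely $L^2$‑type decay of the rough pieces — is the technical heart of the argument.
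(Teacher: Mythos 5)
Your overall plan --- decompose into lacunary scales, split off a single‑scale error, and re‑use the wavelet/Plancherel machinery of \cite{Paper1} --- is the same in spirit as the paper's, and your Step~1 reduction (smooth cutoff $\Phi$ plus a lacunary comparison) is a reasonable alternative to the paper's splitting $\LL_\Om^* \le \mathcal M_\Om + \LL_\Om^\sharp$, which handles the ``error'' via the maximal averaging operator $\mathcal M_\Om$ and the rotation/interpolation argument of Lemma~\ref{AUX}. The decay hypothesis $q>\frac{2m}{m+1}$ enters exactly where you say it does.

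The genuine gap is in Step~3. After the wavelet decomposition one must treat separately the pieces where at least two of the lattice‑bump indices $k_j$ are large ($l\ge 2$ in the paper's notation) and the pieces where only $k_1$ is large ($l=1$). For $l\ge 2$ the machinery of \cite{Paper1} already yields an $\ell^1$‑in‑$\gamma$ bound --- estimate \eqref{2mainprop} --- so there $\sup_\tau\bigl|\sum_{\gamma<\tau}(\cdots)\bigr|\le\sum_{\gamma}|\cdots|$ is free, as you suggest. But for $l=1$ only the $\ell^2$‑in‑$\gamma$ bound \eqref{1mainprop} is available, and your proposed passage $\sup_N\bigl|\sum_{j<N}\mathcal L_{K_j^i}(\vec f)\bigr|\le\sum_{j}|\mathcal L_{K_j^i}(P_jf_1,\dots,P_jf_m)|$ followed by H\"older in the scale index and Fefferman--Stein cannot close the gap from $\ell^2$ to $\ell^1$. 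The reason is structural: in the $l=1$ pieces, only $f_1$ is frequency‑projected onto an annulus of scale $2^{\gamma+\mu}$, while $f_2,\dots,f_m$ are projected onto \emph{balls} of radius $\sim 2^{\gamma}$; the latter do not satisfy any Littlewood--Paley square‑function bound in $\gamma$, so one cannot distribute an $\ell^m_\gamma$ across the $m$ factors and land back on $L^2$. The ``pointwise $M_s$‑domination reconciled by interpolation with the $L^2$ rough bound'' step is a hand‑wave that does not produce the needed per‑$\gamma$ pointwise estimate. This is precisely the place where the paper's argument departs from the obvious: it identifies the lacunary maximal function $\sup_\nu|\Gamma_\nu\ast(\sum_\gamma \mathcal T_{\GGG,1}^{\la,\ga,\mu})|$ with the $H^{2/m}$ norm, invokes the Littlewood--Paley characterization of $H^{2/m}$ together with the Fourier‑support estimate \eqref{marshallest} to return to the $\ell^2$‑in‑$\gamma$ square function, and separately disposes of the polynomial ambiguity to justify that $Q^{\la,\mu,\GGG}=0$. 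Without some substitute for this Hardy‑space step, the supremum in the $l=1$ case is not controlled, and the proof is incomplete.

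A smaller remark: your error term $\mathcal R_\epsilon$ is indeed mean‑zero and single‑scale, but you still owe a proof of the claimed square‑function bound $\|(\sum_N|\mathcal R_{2^N}^i(\vec f)|^2)^{1/2}\|_{L^{2/m}}\lesssim 2^{-\delta i}\cdots$; in the paper the analogous difference kernel is instead dominated pointwise by $\mathcal M_\Om(\vec f)$ and handled once and for all by Lemma~\ref{AUX}, which avoids having to run the wavelet analysis on the remainder at all.
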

This extends and improves a result obtained in  \cite{BH}  when $m=2$ and $q=\infty$.\\

%In \cite{Paper1} we   showed that 
%   if $\Om$ lies in $ L^q
% (\mathbb{S}^{mn-1}) $ with $q>\f{2m}{m+1}$, then the multilinear singular integral operator $\LL_{\Om}$, defined in \eqref{epsilonto0}, satisfies     \eqref{maxinequal} and thus admits a  bounded 
% extension from $L^2(\bbrn)\times \cdots \times L^2(\bbrn)$ to $L^{2/m}(\bbrn)$ by density. 
 The essential contribution of this article is to suitably   combine  
Littlewood-Paley techniques and   wavelet decompositions to reduce the boundedness of $\LL_\Om^*$ to   decaying estimates for norms of  maximal operators associated with lattice bumps; see \eqref{2mmaingoal} for the exact formulation. This   result is actually   proved in terms of Plancherel type inequalities, developed in \cite{Paper1} and stated in Proposition~\ref{keyapplication1}.

%In other words, given $f_j\in L^2(\rn)$, we showed that $\LL_{\Om}(f)$ is well-defined, and it is in $L^{2/m}(\rn)$.

\hfill

%PLEASE CHECK THE FOLLOWING PARAGRAPH as the referee suggested us to ``describe clearly and precisely in the introduction what
%technical novelties and changes are necessary to upgrade the result to maximal truncation.''

%The main technical novelty of this article lies in the proof of this theorem. More precisely, we combine the classical techniques and the wavelet method developed in our previous work to reduce the boundedness of $\LL_\Om^*$ to the decay of  maximal operators associated with lattice bump multipliers; see \eqref{2mmaingoal} for the exact formulation. This desired result could be proved in terms of Plancherel type inequalities obtained in \cite{Paper1}, which we state in Proposition~\ref{keyapplication1}.

%%%%%%%%%%%

The tools used to establish Theorem~\ref{CCC1'} turn out to be useful in the study of pointwise convergence problem of several related operators. 
As an example
let us take multilinear multipliers with limited decay  to demonstrate our  idea. 

For a smooth function $\si\in\mathscr{C}^{\infty}((\bbrn)^m)$ and $\nu\in\bbz$
let
\begin{equation}\label{defSsinu}
S_{\si}^{\nu}\big(f_1,\dots,f_m\big)(x):=\int_{(\bbrn)^m}{\si(2^{\nu}\xxxi)\Big( \prod_{j=1}^{m}\wh{f_j}(\xi_j)\Big)e^{2\pi i\langle x,\sum_{j=1}^{m}\xi_j \rangle}} ~d\,\xxxi
\end{equation}
for Schwartz functions $f_1,\dots,f_m$ on $\bbrn$, where $\xxxi:=(\xi_1,\dots,\xi_m) \in (\bbrn)^m$.
We are interested in the poinwise convergence of $S^\nu_\si$ when $\nu\to-\infty$.
We pay particular attention to $\si$ satisfying   the limited 
decay property (for some fixed $a$)
$$
\big|\partial^{\beta}\si(\xxxi) \big|\lesssim_{\beta}|\xxxi|^{-a}
$$
for sufficiently many $\be$. Examples of multipliers of this type include $\wh \mu$, the Fourier transform of the spherical measure $\mu$; see  \cite{Ca79, CW78,  Ru} for the corresponding linear results.

The second contribution of this work  is the following result.
%We obtain the following result.
\begin{thm}\label{CCC2'}
Let $m\ge 2$ and $a>\frac{(m-1)n}{2}$.
Let 
$\si\in\mathscr{C}^{\infty}((\bbrn)^m)$ satisfy
\begin{equation}\label{givenassumption}
\big|\partial^{\beta}\si(\xxxi) \big|\lesssim_{\beta}|\xxxi|^{-a}
\end{equation}
for all $|\beta|\le  \big[ \frac{(m-1)n}{2} \big]+1$, where $ \left[ r\right]$ denotes the integer part of $r$.
%and for $\nu\in \mathbb Z$ let $S_\sigma^\nu$ be as in \eqref{defSsinu}. 
Then for 
$f_j$ in $L^2(\rn)$, $j=1,\dots , m$, the   functions 
$
S_\sigma^\nu(f_1,\dots, f_m) 
$
converge a.e. to $\si(0) f_1 \cdots f_m$ as $\nu \to -\infty$. Additionally, if $\lim_{y\to \infty} 
\si(y)$ exists and and equals $L$, then the functions 
$S_\sigma^\nu(f_1,\dots, f_m) $ converge a.e. to 
$L f_1 \cdots f_m$ as $\nu \to  \infty$. 
\end{thm}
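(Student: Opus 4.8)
The plan is to deduce Theorem~\ref{CCC2'} from Proposition~\ref{prop00'} in exactly the same way Theorem~\ref{CCC1'} is reduced to Theorem~\ref{MAXSINGINT}: namely, to prove an $L^2\times\cdots\times L^2\to L^{2/m}$ bound for the maximal operator
\[
S_\sigma^\ast(f_1,\dots,f_m):=\sup_{\nu\in\bbz}\big|S_\sigma^\nu(f_1,\dots,f_m)-\text{(limit multiplier piece)}\big|,
\]
together with a.e. convergence on a dense subclass. First I would treat the dense class: for $f_j\in\mathscr{S}(\bbrn)$ (or with compactly supported smooth Fourier transforms), the limit $\lim_{\nu\to-\infty}S_\sigma^\nu(f_1,\dots,f_m)=\sigma(0)f_1\cdots f_m$ is elementary by dominated convergence, since $\sigma(2^\nu\xxxi)\to\sigma(0)$ pointwise and boundedly on the compact support of $\prod\wh{f_j}$; the $\nu\to+\infty$ statement under the hypothesis $\lim_{y\to\infty}\sigma(y)=L$ is handled similarly, splitting the integral into a small ball near the origin (whose contribution vanishes as $\nu\to\infty$ because $\sigma$ is bounded and the measure of the ball shrinks after rescaling) and the complement (where $\sigma(2^\nu\xxxi)\to L$). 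So the entire content is the maximal bound, and once that is in hand Proposition~\ref{prop00'} with $p_j=2$, $q=2/m$, $D_j=\mathscr{S}(\bbrn)$ upgrades the dense-class convergence to a.e. convergence for all $L^2$ inputs.

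The heart of the matter is therefore
\begin{equation}\label{planmaxbound}
\big\Vert S_\sigma^\ast(f_1,\dots,f_m)\big\Vert_{L^{2/m}(\bbrn)}\le C\prod_{j=1}^m\Vert f_j\Vert_{L^2(\bbrn)}.
\end{equation}
To prove \eqref{planmaxbound} I would first reduce to a single compactly-Fourier-supported bump. Write $\sigma=\sigma(0)\psi_0+\sum_{k\ge 1}\sigma_k$ where $\psi_0$ is a smooth cutoff near $0$ and each $\sigma_k$ is supported in an annulus $|\xxxi|\sim 2^k$; by the decay hypothesis \eqref{givenassumption} one has, after rescaling to a fixed annulus, that $\|\sigma_k(2^k\,\cdot)\|$ in any fixed $\mathscr{C}^{M}$ norm with $M=[\tfrac{(m-1)n}{2}]+1$ is $O(2^{-ak})$. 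The multiplier operator with symbol $\sigma_k(2^\nu\xxxi)$ is then, up to the harmless constant piece contributing $\sigma(0)f_1\cdots f_m$ (which cancels in $S_\sigma^\nu-\sigma(0)f_1\cdots f_m$), a dilate of a fixed smooth bilinear-type multiplier of H\"ormander class, and its associated lacunary maximal operator is exactly the object controlled by the Plancherel-type machinery of \cite{Paper1}. The key quantitative input is Proposition~\ref{keyapplication1} (the Plancherel-type inequality) applied to lattice/annular bumps: it yields a bound for the $\nu$-maximal operator associated with a single normalized bump of the form $C\,2^{-ak}\,2^{k\cdot(\text{loss exponent})}$, and the requirement $a>\tfrac{(m-1)n}{2}$ is precisely what makes the resulting geometric series in $k$ summable.

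So the main obstacle — and the step I expect to absorb almost all the work — is establishing the decaying maximal estimate for a single annular bump $\sigma_k$ with the correct power of $2^k$, i.e.\ showing the loss in $k$ incurred by passing from a single multiplier to its lacunary maximal operator is strictly smaller than $a$ when $a>\tfrac{(m-1)n}{2}$. This is where Littlewood--Paley decomposition of each $f_j$ interacts with the wavelet/lattice-bump decomposition: one expands the bump in a wavelet basis, controls each lattice-bump maximal operator by the $L^2\times\cdots\times L^2\to L^{2/m}$ Plancherel inequality of Proposition~\ref{keyapplication1}, and sums the wavelet coefficients using the $\mathscr{C}^M$ smoothness with $M=[\tfrac{(m-1)n}{2}]+1$ — the same number of derivatives appearing in \eqref{givenassumption}, which is not a coincidence but rather dictates the critical threshold $\tfrac{(m-1)n}{2}$. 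The supremum over $\nu$ is handled by the standard device of bounding $\sup_\nu|\cdot|$ by an $\ell^2$ (or $\ell^{2/m}$, via embedding) sum over $\nu$ of the pieces, exploiting the frequency-orthogonality that the annular localization of $\sigma_k$ provides after the dilation by $2^\nu$; this costs at most a further bounded factor and does not affect summability in $k$.
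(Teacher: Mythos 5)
Your proposal is correct and follows essentially the same route as the paper: reduce the a.e.\ convergence to the $L^2\times\cdots\times L^2\to L^{2/m}$ bound for the lacunary maximal operator $\mathscr{M}_\sigma$ via Proposition~\ref{prop00'} (with dense-class convergence by dominated convergence), then prove that bound by an annular decomposition of $\sigma$, a wavelet expansion of each annular piece, and the Plancherel-type estimates of Proposition~\ref{keyapplication1} with the decay $a>\tfrac{(m-1)n}{2}$ supplying summability. The only cosmetic difference is that the paper bounds $\sup_\nu|S_\sigma^\nu|$ directly (Theorem~\ref{application4}) without subtracting the limit multiplier piece, which is unnecessary since $\sigma(0)f_1\cdots f_m$ is itself $L^2\times\cdots\times L^2\to L^{2/m}$ bounded by H\"older.
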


% Suppose that $\si\in\mathscr{C}^{\infty}((\bbrn)^m)$ satisfies
%\begin{equation}\label{givenassumption}
%\big|\partial^{\beta}\si(\xxxi) \big|\lesssim_{\beta}|\xxxi|^{-a}
%\end{equation}
%for all $|\beta|\le \left[ \frac{(m-1)n}{2}\right]+1$, where $ \left[ r\right]$ denotes the integer part of $r$.

This problem is also reduced to the boundedness of the associated
 $m$-(sub)linear lacunary maximal multiplier operator  defined by: 
\begin{equation*}
\mathscr{M}_{\si}\big(f_1,\dots,f_m\big)(x):=\sup_{\nu \in\bbz}{\big|S_{\si}^{\nu}\big(f_1,\dots,f_m\big)(x)\big|}.
\end{equation*}
$\mathscr{M}_{\si}$ is the so-called multilinear spherical maximal function  when $\si=\wh\mu$, which was studied extensively recently by \cite{AP, Barrionuevo2017, DG, HHY, JL}. In particular a bilinear version of the following theorem was previously obtained in \cite{Gr_He_Ho2020}.

%Our second main result is the following theorem. 

\begin{thm}\label{application4}
Let $m\ge 2$ and $a>\frac{(m-1)n}{2}$.
Let 
 $\si\in\mathscr{C}^{\infty}((\bbrn)^m)$
be as in Theorem~\ref{CCC2'} 
% Suppose that $\si\in\mathscr{C}^{\infty}((\bbrn)^m)$ satisfies
%\begin{equation}\label{givenassumption}
%\big|\partial^{\beta}\si(\xxxi) \big|\lesssim_{\beta}|\xxxi|^{-a}
%\end{equation}
%for all $|\beta|\le \left[ \frac{(m-1)n}{2}\right]+1$, where $ \left[ r\right]$ denotes the integer part of $r$.
Then there exists a constant $C>0$ such that
\begin{equation*}
\big\Vert \mathscr{M}_{\si}(f_1,\dots,f_m)\big\Vert_{L^{2/m}(\bbrn)}\le C \prod_{j=1}^{m}\Vert f_j\Vert_{L^2(\bbrn)}
\end{equation*}
for Schwartz functions $f_1,\dots,f_m$ on $\bbrn$.
\end{thm}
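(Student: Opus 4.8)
The plan is to imitate the proof of Theorem~\ref{MAXSINGINT}: perform a Littlewood--Paley decomposition of $\si$ in the frequency variable, expand each dyadic piece into lattice bumps, and reassemble the pieces by means of the Plancherel-type inequalities of Proposition~\ref{keyapplication1}. Concretely, I would fix a smooth partition $1=\Phi(\xxxi)+\sum_{k\ge1}\Psi(2^{-k}\xxxi)$ on $(\bbrn)^m$ with $\Phi$ supported in $\{|\xxxi|\le2\}$ and $\Psi$ in $\{1/2\le|\xxxi|\le2\}$, and write $\si=\si_0+\sum_{k\ge1}\si_k$ accordingly, so that $\si_k$ is supported in $\{|\xxxi|\sim2^k\}$. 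From \eqref{givenassumption} and the Leibniz rule one gets $|\partial^{\be}\si_k(\xxxi)|\ls 2^{-ak}$ for all $|\be|\le\big[\f{(m-1)n}{2}\big]+1$. Since $\mathscr{M}_{\si}\le\mathscr{M}_{\si_0}+\sum_{k\ge1}\mathscr{M}_{\si_k}$, and since $S_{\si_0}^{\nu}$ is convolution against the $L^1$-normalized dilates of a fixed Schwartz function, so that $\mathscr{M}_{\si_0}(f_1,\dots,f_m)$ is pointwise dominated by a constant times $\prod_{j=1}^{m}Mf_j$ ($M$ the Hardy--Littlewood maximal operator), the piece $k=0$ is acceptable by H\"older's inequality and the $L^2$ boundedness of $M$. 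It therefore suffices to show, for each $k\ge1$ and some $\vep>0$ independent of $k$, that
\[
\big\Vert\mathscr{M}_{\si_k}(f_1,\dots,f_m)\big\Vert_{L^{2/m}(\bbrn)}\ls 2^{-\vep k}\prod_{j=1}^{m}\Vert f_j\Vert_{L^2(\bbrn)} ,
\]
and then to sum the resulting geometric series in $k$.

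Fix $k\ge1$. Rescaling the frequency variable by $2^{k}$ replaces $\si_k$ by a multiplier $\tau_k$ supported in the unit annulus with $|\tau_k|\ls2^{-ak}$ but only with the weaker derivative bounds $|\partial^{\be}\tau_k|\ls2^{(|\be|-a)k}$; equivalently, $\tau_k$ has amplitude $\sim2^{-ak}$ and oscillates at the fine scale $2^{-k}$, a feature inherited from the fact that \eqref{givenassumption} imposes no additional decay on the derivatives of $\si$ (as is the case for $\wh{\mu}$). Since $S_{\si_k}^{\nu}=S_{\tau_k}^{\nu-k}$ we have $\mathscr{M}_{\si_k}=\mathscr{M}_{\tau_k}$. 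I would then split the annulus according to which of $|\xi_1|,\dots,|\xi_m|$ is largest, a splitting into finitely many pieces, and expand each localized piece of $\tau_k$ into (modulated) lattice bumps, $\tau_k=\sum_{\vec{\ell}\in\bbz^{mn}}c_{\vec{\ell}}\,\Theta_{\vec{\ell}}$, where the $\Theta_{\vec{\ell}}$ are translates of a fixed Schwartz bump adapted to scale $2^{-k}$. The available regularity, precisely $\big[\f{(m-1)n}{2}\big]+1$ derivatives of $\tau_k$, forces the coefficients $c_{\vec{\ell}}$ to concentrate at frequencies $|\vec{\ell}|\sim2^{k}$, with $\big(\sum_{\vec{\ell}}|c_{\vec{\ell}}|^{2}\big)^{1/2}\ls2^{-ak}$ and rapidly decaying contributions from the coarser and finer scales.

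It then remains to control the lattice-bump superposition, which is where Proposition~\ref{keyapplication1} enters: the maximal operator associated with a single lattice bump maps $L^2\times\cdots\times L^2$ into $L^{2/m}$ with operator norm growing at most polynomially of degree $\f{(m-1)n}{2}$ in $|\vec{\ell}|$, and---this being exactly the Plancherel-type statement formulated in \eqref{2mmaingoal}---these estimates combine with $\ell^{2}$ orthogonality in $\vec{\ell}$. Together with the concentration of the coefficients at $|\vec{\ell}|\sim2^{k}$ this gives
\[
\big\Vert\mathscr{M}_{\tau_k}(f_1,\dots,f_m)\big\Vert_{L^{2/m}(\bbrn)}\ls 2^{k(m-1)n/2}\,2^{k\de}\Big(\sum_{\vec{\ell}}|c_{\vec{\ell}}|^{2}\Big)^{1/2}\prod_{j=1}^{m}\Vert f_j\Vert_{L^2(\bbrn)}\ls 2^{-(a-\frac{(m-1)n}{2}-\de)k}\prod_{j=1}^{m}\Vert f_j\Vert_{L^2(\bbrn)}
\]
for arbitrarily small $\de>0$, which is the desired per-scale estimate with $\vep=a-\f{(m-1)n}{2}-\de>0$; summing over $k\ge1$ finishes the argument. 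The step from the fixed-dilation Plancherel inequality to its maximal form is carried out, as usual in this circle of ideas, via $\sup_{\nu}|S_{\tau_k}^{\nu}|^{2}\ls\big(\int|S_{\tau_k}^{t}|^{2}\,dt\big)^{1/2}\big(\int|\partial_{t}S_{\tau_k}^{t}|^{2}\,dt\big)^{1/2}$: the $t$-derivative brings down a factor of size $2^{k}$ that is absorbed into the per-scale cost, and the $t$-integration is tamed by the Littlewood--Paley orthogonality in $\nu$ of the factor carrying the largest frequency---which is exactly why the hypothesis \eqref{givenassumption} is imposed on the derivatives $\partial^{\be}\si$ and not merely on $\si$ itself.

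The main obstacle is that $L^{2/m}$ with $2/m\le1$ is not amenable to the naive triangle inequality or to duality, so the supremum over $\nu$ cannot be disposed of by summing norms; it has to be absorbed inside the Plancherel-type inequalities. The delicate part is the precise balancing---of the fine oscillation scale $2^{-k}$ of $\tau_k$, the amplitude $2^{-ak}$, the per-scale Plancherel cost $2^{k(m-1)n/2}$, and the number $\big[\f{(m-1)n}{2}\big]+1$ of available derivatives---that makes the sum over $k$ converge precisely under the hypothesis $a>\f{(m-1)n}{2}$, with no room to spare.
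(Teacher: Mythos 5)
Your overall framework matches the paper's: Littlewood--Paley decomposition of $\si$ into dyadic annular pieces $\si_\mu$, absorption of the low-frequency piece into $\prod_j\mathcal{M}f_j$, expansion of each $\si_\mu$ into compactly supported lattice bumps (the paper uses the Triebel wavelet system $\Psi^\la_{\GGG,\kkk}$ at all scales $\la\in\bbn_0$, not a single scale), and the per-scale Plancherel-type inequalities of Proposition~\ref{keyapplication1}. The arithmetic of the exponents --- the choice of $q$ with $\frac{(m-1)n}{2}<\frac{mn}{q}<\min(a,M)$, the coefficient decay from \eqref{lqestimate}, and the final geometric sum in $\mu$ --- is also the paper's.

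The genuine gap is in how you propose to pass from a fixed-dilation estimate to the maximal estimate. You invoke the Sobolev/fundamental-theorem-of-calculus device $\sup_{\nu}|S_{\tau_k}^{\nu}|^{2}\ls\big(\int|S_{\tau_k}^{t}|^{2}\,dt\big)^{1/2}\big(\int|\partial_{t}S_{\tau_k}^{t}|^{2}\,dt\big)^{1/2}$. That device is designed for the \emph{continuous} supremum, and it is not free: since \eqref{givenassumption} does not improve under differentiation, the $t$-derivative of $\si_\mu(2^t\xxxi)$ has amplitude larger by a factor $2^{\mu}$ than $\si_\mu(2^t\xxxi)$ itself, so the geometric mean of the two square-functions costs an extra $2^{\mu/2}$ per dyadic annulus. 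Your per-scale bound would then be $2^{-(a-\frac{(m-1)n}{2}-\frac12-\delta)\mu}$, which is \emph{not} summable under the theorem's hypothesis $a>\frac{(m-1)n}{2}$; it would require $a>\frac{(m-1)n}{2}+\frac12$. That stronger hypothesis is exactly what one expects for a full (non-lacunary) maximal estimate, and indeed the paper's Concluding Remarks explicitly leave the non-lacunary case open.

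The theorem, however, only concerns the \emph{lacunary} maximal operator $\mathscr{M}_\si=\sup_{\nu\in\bbz}|S^\nu_\si|$, and this is what the paper exploits. After the wavelet expansion one has a sum over the dilation parameter $\ga\in\bbz$ (your $\nu$); the supremum over $\nu\in\bbz$ is controlled simply via the embeddings $\ell^\infty(\bbz)\hookrightarrow\ell^2(\bbz)$ (case $l=1$, estimate \eqref{1mainprop}) or $\ell^\infty(\bbz)\hookrightarrow\ell^1(\bbz)$ (cases $2\le l\le m$, estimate \eqref{2mainprop}). These embeddings are lossless, the resulting $\ell^2_\ga$ and $\ell^1_\ga$ expressions are exactly the objects estimated in Proposition~\ref{keyapplication1}, and no $t$-derivative or Cauchy--Schwarz in $t$ is ever invoked. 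Replacing your differentiation step with these discrete embeddings removes the spurious $1/2$ loss and makes the sum over $\mu$ converge precisely when $a>\frac{(m-1)n}{2}$.

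Two smaller imprecisions: the lattice-bump expansion should be carried out over all wavelet scales $\la\in\bbn_0$, with the coefficient bound \eqref{lqestimate} supplying the exponential decay $2^{-\la(M-mn/q+mn/2)}$ needed to sum in $\la$ --- your ``single scale $2^{-k}$'' description elides this; and the coefficient sequence should be estimated in $\ell^q$ for a suitably chosen $1<q<\frac{2m}{m-1}$ (together with the embedding $\ell^q\hookrightarrow\ell^\infty$), not in $\ell^2$ as you state, since $\ell^2$ does not satisfy the constraint $\frac{mn}{q}<\min(a,M)$ when $a$ or $M$ is close to $\frac{(m-1)n}{2}$.
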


%We remark that a bilinear version of this theorem was previous obtained in \cite{Gr_He_Ho2020}, and the particular case when $\si$ is the Fourier transform of spherical measure were studied extensively in \cite{AP, Barrionuevo2017, DG, HHY, JL}

%Assuming the conclusions of Theorems~\ref{MAXSINGINT} and ~\ref{application4}, by density, 
It follows from Theorems~\ref{MAXSINGINT} and ~\ref{application4} that
$\LL_{\Om}^*$ and $\mathscr{M}_{\si}$ have  unique bounded extensions from $L^2\times \cdots \times L^2$ to $L^{2/m}$ by density.
%{\color{blue}On the submultilinear density see \cite[Proposition 1]{GDOS}. {\color{red} $\leftarrow$ What is this??  }}\\

Let us now sketch the proof of Theorem~\ref{CCC2'}, taking Theorem \ref{application4} temporarily for granted.
We notice that the claimed 
convergence  holds pointwise everywhere  for smooth functions $f_j$ with compact support
by the Lebesgue dominated convergence theorem.
Then the assertions are immediate consequences of Proposition~\ref{prop00'}.\\

\begin{comment}
\bpf[Proof of Theorem~\ref{CCC1'} and Theorem~\ref{CCC2'}]
To verify these results, we notice that the claimed 
convergence   holds pointwise everywhere  for smooth functions $f_j$ with compact support
by the Lebesgue dominated convergence theorem.  In the case of the Theorem~\ref{CCC2'} this assertion is straightforward, while in the case of Theorem~\ref{CCC1'} it is a consequence 
of the smoothness of the function $f_1(x)\cdots f_m(x)$ at the origin combined with the cancellation of $\Om$. The assertions in both theorems then follow by 
applying  Proposition~\ref{prop00'}.
\epf
\end{comment}

 As Theorems \ref{CCC1'} and \ref{CCC2'} follow from Theorems~\ref{MAXSINGINT} and ~\ref{application4}, respectively, 
we   actually focus on the proof of Theorems~\ref{MAXSINGINT} and ~\ref{application4} in the remaining sections.

%{\color{red}  Theorems~\ref{MAXSINGINT} and ~\ref{application4} find applications related  to the almost everywhere convergence of truncated $m$-linear singular integrals. On this see the last section.}  

\section{Preliminary material}\label{11171}

%This work, being   a continuation of \cite{Paper1},  follows the notation   introduced in that paper

 We adapt some notations and key estimates  from \cite{Paper1}.
 For the sake of independent reading we review the main tools and notation. 
We begin with  certain orthonormal bases of $L^2$ due to Triebel \cite{Tr2010},  that will be of great use in our work. 
The idea is as follows.  
For any fixed $L\in\bbn$ one can construct real-valued compactly supported functions $\psi_F, \psi_M$ in  $\mathscr{C}^L(\bbr)$ satisfying the following properties:
  $\Vert \psi_F\Vert_{L^2(\bbr)}=\Vert \psi_M\Vert_{L^2(\bbr)}=1$, 
  $\int_{\bbr}{x^{\alpha}\psi_M(x)}dx=0$ for all $0\le \alpha \le L$, and moreover, 
if     $\Psi_{\GGG}$ is a function on $\bbr^{mn}$, defined by
$$\Psi_{\GGG}(\xxx):=\psi_{g_1}(x_1)\cdots \psi_{g_{mn}}(x_{mn})$$
for $\xxx:=(x_1,\dots,x_{mn})\in \bbr^{mn}$ and $\GGG:=(g_1,\dots,g_{mn})$ in the set     
 $$\II:=\big\{\GGG:=(g_1,\dots,g_{mn}):g_i\in\{F,M\} \big\},$$
then the family of functions
\begin{equation*}
\bigcup_{\la\in\bbn_0}\bigcup_{\kkk\in \bbz^{mn}}\big\{ 2^{\la{mn/2}}\Psi_{\GGG}(2^{\la}\xxx-\kkk):\GGG\in \II^{\la}\big\}
\end{equation*}
forms an orthonormal basis of $L^2(\bbr^{mn})$,
where $\II^0:=\II$ and  for $\la \ge 1$, we set  $\II^{\la}:=\II\setminus \{(F,\dots,F)\}$.

We   consistently use   the notation $\xxxi:=(\xi_1,\dots,\xi_m) $ for elements of $ (\bbrn)^m$, $\GGG:=(G_1,\dots,G_m)\in (\{F,M\}^n)^m$, and  
$
\Psi_{\GGG}(\xxxi)=\Psi_{G_1}(\xi_1)\cdots \Psi_{G_m}(\xi_m).
$
For each $\kkk:=(k_1,\dots,k_m)\in (\bbzn)^m$ and $\la\in \bbn_0$, let 
$$
\Psi_{G_i,k_i}^{\la}(\xi_i):=2^{\la n/2}\Psi_{G_i}(2^{\la}\xi_i-k_i), \qq 1\le i\le m
$$
and
$$
\Psi_{\GGG,\kkk}^{\la}(\xxxi\,):=\Psi_{G_1,k_1}^{\la}(\xi_1)\cdots \Psi_{G_m,k_m}^{\la}(\xi_m).
$$
We also assume that the  support of $\psi_{g_i}$ is contained in $\{\xi\in \bbr: |\xi|\le C_0 \}$ for some $C_0>1$,
which implies that
\begin{equation*}
\textup{Supp} (\Psi_{G_i,k_i}^\la)\subset \big\{\xi_i\in\bbrn: |2^{\la}\xi_i-k_i|\le C_0\sqrt{n}\big\}.
\end{equation*}
In other words, the support of $\Psi_{G_i,k_i}^\la$ is contained in the ball centered at $2^{-\la}k_i$ and radius $C_0\sqrt{n}2^{-\la}$.
Then we note that for a fixed $\lambda\in\bbn_0$, elements of $\big\{\Psi_{\GGG,\kkk}^{\lambda}\big\}_{\kkk}$ have (almost) disjoint compact supports.

%Let $\UU$ be a subset of $(\bbzn)^m$ and $\{b^{\la}_{\kkk}\}_{\kkk\in \UU}$ be a sequence of complex numbers. 

% The Fourier transform of $f$ defined by $\wh{f} (\xi)=\int_{\bbrn} f(x)e^{-2\pi i\langle x,\xi\rangle}dx$. 

%We define an operator 
%\begin{equation}\label{sigmamultiplier}
%\si^{\la}(\xxxi):=\sum_{\kkk\in\mathcal{U}} b^{\la}_{\kkk} \om_{\kkk}^{\la}(\xxxi)
%\end{equation} and the corresponding $m$-linear multiplier operator by
%\begin{equation}\label{multiplieroperator}
%T_{\si^{\la}}\big(f_1,\dots,f_m\big)(x):=\int_{(\bbrn)^m}{\si^{\la}(\xxxi)\Big(\prod_{j=1}^{m}\wh{f_j}(\xi_j)\Big)e^{2\pi i\langle x,\sum_{j=1}^{m}\xi_j\rangle}}d\xxxi
%\end{equation}
%for $   x\in\bbrn$ and Schwartz functions $f_1\dots,f_m$ on $\bbrn$,

%Let $L_s^q((\bbrn)^m)$ be the Sobolev space consisting of all functions $F$ such that $(\vec{I}-\vec{\Delta})^{s/2}F\in L^q((\bbrn)^m)$, where $\vec{\Delta}$ is the Laplacian acting on functions on $(\bbrn)^m$.

It is also known in \cite{Tr2006} that
if $L$ is sufficiently large, then every tempered distribution $H$ on $\bbr^{mn}$ can be represented as
\begin{equation}\label{daubechewavelet}
H(\xxx)=\sum_{\la\in\bbn_0}\sum_{\GGG\in\II^{\la}}\sum_{\kkk\in \bbz^{mn}}b_{\GGG,\kkk}^{\la}2^{\la mn/2}\Psi_{\GGG}(2^{\la} \xxx -\kkk)
\end{equation}
and for $1<q<\infty$ and $s\ge 0$,
$$
\Big\Vert \Big(\sum_{\vec G,\ \vec k} \big|b_{\vec G,\vec k}^\la \Psi^\la_{\vec G,\vec k}\big|^2\Big)^{1/2} \Big\Vert_{L^q(\bbr^{mn})} \le C2^{-s\la}\|H\|_{L^q_s(\bbr^{mn})} 
$$
where 
%$ \Psi^\la_{\vec G,\kkk}(\xxx)=2^{\la mn/2}\Psi_{\GGG}(2^{\la} \xxx -\kkk)$,   
\begin{equation*}
b_{\GGG,\kkk}^{\la}:=\int_{\bbr^{mn}}{H(\xxx)\Psi^\la_{\vec G,\vec k}(\xxx)}~d\xxx 
\end{equation*}
and $L^q_s$ is the  Sobolev space of functions $H$ such that 
$(I-\Delta)^{s/2} H \in L^q(\mathbb R^{mn})$. 
Moreover, it follows from the last estimate and 
from the (almost) disjoint support property  of the $\Psi^\la_{\vec G,\vec k}$'s that 
\begin{align}
\big\Vert \big\{b_{\GGG,\kkk}^{\la}\big\}_{\kkk\in \bbz^{mn}}\big\Vert_{\ell^{q}}
\approx&\Big(2^{\la mn(1-q/2)}\int_{\bbr^{mn}}\Big( \sum_{\kkk} \big|b^\la_{\vec G,\vec k}\Psi^\la_{\vec G,\vec k}(\vec x)\big|^2\Big)^{q/2}~ d\xxx\Big)^{1/q}\notag \\
\lesssim& \; 2^{-\la (s-mn/q+mn/2)}\Vert H\Vert_{L^q_s(\bbr^{mn})}. \label{lqestimate}
\end{align}

Now we study an essential estimate in \cite{Paper1} which will play a significant role in the proof of both Theorems \ref{MAXSINGINT} and \ref{application4}.
We define the operator $L_{G_i,k_i}^{\la,\ga}$ by
\begin{equation}\label{lgklg}
L_{G_i,k_i}^{\la,\ga}f:=\big( \Psi_{G_i,k_i}^{\la}(\cdot/2^{\ga})\wh{f}\big)^{\vee}, \qq \ga\in\bbz.
\end{equation} 
For $\mu\in\bbz$ let 
\begin{equation}\label{uset}
\UU^{\mu}:=\big\{ \kkk\in (\bbzn)^m: 2^{\mu-2}\le |\kkk|\le 2^{\mu+2},~  |k_1|\ge\cdots \ge |k_m| \big\}
\end{equation}
and split the set into $m$ disjoint subsets $\UU_l^{\mu}$ as below:
\begin{align*}
\UU_1^{{\mu}}&:=\big\{\kkk\in \UU^{{\mu}}:|k_1|\ge 2C_0\sqrt{n}>|k_2|\ge\cdots \ge |k_m| \big\}\\
\UU_2^{{\mu}}&:=\big\{\kkk\in \UU^{{\mu}}:|k_1|\ge |k_2|\ge 2C_0\sqrt{n}>|k_3|\ge\cdots \ge |k_m| \big\}\\
&\vdots\\
\UU_m^{{\mu}}&:=\big\{\kkk\in \UU^{{\mu}}:|k_1|\ge \cdots\ge |k_m|\ge 2C_0\sqrt{n} \big\}.
\end{align*}
Then we have the following two observations that appear in \cite{Paper1}.
\begin{itemize}
\item For $\kkk\in\UU_l^{\la+\mu}$,
\begin{equation}\label{lgkest}
L_{G_j,k_j}^{\la,\ga}f=L_{G_j,k_j}^{\la,\ga}f^{\la,\ga,{\mu}}  \q \text{ for }~  1\le j\le l 
\end{equation}
due to the support of $\Psi_{G_j,k_j}^{\la}$, where $\wh{f^{\la,\ga,\mu}}(\xi):=\wh{f}(\xi)\chi_{C_0\sqrt{n}2^{\ga-\la}\le |\xi|\le 2^{\ga+\mu+3}}$.
\item For $\mu\ge 1$ and $\la\in\bbn_0$, 
\begin{equation}\label{L2}
\Big( \sum_{\ga\in\bbz}{\big\Vert f^{\la,\ga,{\mu}}\big\Vert_{L^2}^2}\Big)^{1/2}\lesssim ({\mu}+\la)^{1/2}\Vert f\Vert_{L^2} \lesssim \mu^{1/2}(\la+1)^{1/2}\Vert f\Vert_{L^2} 
\end{equation} 
where Plancherel's identity is applied in the first inequality.
\end{itemize}

\begin{prop}[{\cite[Proposition 2.4]{Paper1}}]\label{keyapplication1}
Let $m$ be a positive integer with $m\ge 2$ and $0<q<\frac{2m}{m-1}$. Fix $\la\in\bbn_0$ and $\GGG\in \II^{\la}$.
Suppose that $\{b_{\GGG,\kkk}^{\la,\ga,\mu}\}_{\GGG\in \II^{\la}, \ga,\mu\in \bbz, \kkk\in (\bbzn)^m}$ is a sequence of complex numbers satisfying 
\begin{equation*}
\sup_{\ga \in \bbz}\big\Vert \{b_{\GGG,\kkk}^{\la,\ga,\mu}\}_{\kkk\in (\bbzn)^m} \big\Vert_{\ell^{\infty}}\le A_{\GGG,\la,\mu}
\end{equation*}
 and 
 \begin{equation*}
 \sup_{\ga \in\bbz}\big\Vert \{b_{\GGG,\kkk}^{\la,\ga,\mu}\}_{\kkk\in (\bbzn)^m} \big\Vert_{\ell^{q}}\le B_{\GGG,\la,\mu,q}.
 \end{equation*}
Then the following statements hold:
 \begin{enumerate}
 \item For $1\le r\le 2$,  there exists a constant $C>0$, independent of $,\GGG, \la, \mu$, such that
\begin{align*}
&\Big\Vert \Big(\sum_{\ga\in\bbz}\Big| \sum_{\kkk\in \UU_1^{\la+\mu}} b_{\GGG,\kkk}^{\la,\ga,\mu}L_{G_1,k_1}^{\la,\ga}f_1^{\la,\ga,\mu}\prod
_{j=2}^{m}L_{G_j,k_j}^{\la,\ga}f_j\Big|^r\Big)^{1/r}\Big\Vert_{L^{2/m}}\\
&\le C A_{\GGG,\la,\mu} 2^{\la mn/2}\Big(\sum_{\ga\in\bbz}{\Vert f_1^{\la,\ga,\mu}\Vert_{L^2}^r} \Big)^{1/r} \prod_{j=2}^{m}\Vert f_{j}\Vert_{L^2}
\end{align*}  
for Schwartz functions $f_1,\dots,f_m$ on $\bbrn$.
 \item For $2\le l\le m$ there exists a constant $C>0$, independent of $\GGG, \la, \mu$, such that
\begin{align*}
&\Big\Vert \sum_{\ga\in\bbz} \Big| \sum_{\kkk\in \UU_l^{\la+\mu}} b_{\GGG,\kkk}^{\la,\ga,\mu}\Big( \prod_{j=1}^{l}L_{G_j,k_j}^{\la,\ga}f_j^{\la,\ga,\mu}\Big)\Big( \prod_{j=l+1}^{m}L_{G_j,k_{j}}^{\la,\ga}f_{j}\Big) \Big| \Big\Vert_{L^{2/m}}\\
&\le C A_{\GGG,\la,\mu}^{1-\frac{(l-1)q}{2l}}B_{\GGG,\la,\mu,q}^{\frac{(l-1)q}{2l}} 2^{\la mn/2}\Big[ \prod_{j=1}^{l}\Big(\sum_{\ga\in\bbz}{\Vert f_j^{\la,\ga,\mu}\Vert_{L^2}^2} \Big)^{1/2}\Big] \Big[\prod_{j=l+1}^{m}\Vert f_{j}\Vert_{L^2}\Big]
\end{align*} 
for Schwartz functions $f_1,\dots,f_m$ on $\bbrn$,
 where $\prod_{m+1}^m$ is understood as empty.
 \end{enumerate}
\end{prop}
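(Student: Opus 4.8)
The plan is to handle both assertions by the same scheme: for each fixed $\ga$ one isolates the factors $L_{G_j,k_j}^{\la,\ga}f_j$ whose index $k_j$ is small, controls them by a pointwise maximal bound, and treats the remaining factors by a Plancherel/orthogonality argument that converts the $\ell^\infty$ and $\ell^q$ hypotheses on the coefficients into the stated constants. For $\kkk\in\UU_l^{\la+\mu}$ the tail indices $k_{l+1},\dots,k_m$ lie in the fixed finite set $\{k\in\bbzn:|k|<2C_0\sqrt n\}$, of cardinality bounded independently of $\la,\mu$, so after splitting this finite sum by the triangle inequality we may treat $k_{l+1},\dots,k_m$ as fixed. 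By \eqref{lgklg}, $L_{G_j,k_j}^{\la,\ga}f_j=f_j*\phi_{j,\ga}$ with $\phi_{j,\ga}(x)=2^{\ga n}(\Psi_{G_j,k_j}^\la)^\vee(2^\ga x)$; since each $\psi_{g}\in\ms C^L(\bbr)$ has compact support, $(\Psi_{G_j,k_j}^\la)^\vee$ has the radially decreasing integrable majorant $x\mapsto 2^{-\la n/2}(1+2^{-\la}|x|)^{-L}$, of $L^1$-mass $\ls 2^{\la n/2}$, whence $\sup_{\ga}|L_{G_j,k_j}^{\la,\ga}f_j|\ls 2^{\la n/2}Mf_j$ for the Hardy--Littlewood maximal operator $M$. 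Dominating $\prod_{j>l}|L_{G_j,k_j}^{\la,\ga}f_j|$ by $\prod_{j>l}\sup_{\ga}|L_{G_j,k_j}^{\la,\ga}f_j|$, pulling these factors out of the norm, and applying H\"older in $x$ (with $L^{2/m}\hookleftarrow L^{2/l}\times L^2\times\cdots\times L^2$, resp.\ $L^{2/m}\hookleftarrow L^2\times\cdots\times L^2$ in part (1)) together with $\|Mf_j\|_{L^2}\ls\|f_j\|_{L^2}$, one peels off the factors with $j>l$ at the cost of $2^{(m-l)\la n/2}\prod_{j>l}\|f_j\|_{L^2}$ (resp.\ the factors with $j\ge 2$ at the cost of $2^{(m-1)\la n/2}\prod_{j\ge 2}\|f_j\|_{L^2}$ in part (1)); these supply the remaining powers of $2^{\la n/2}$.

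For part (1) only the factor indexed by $k_1$ is left, inside a norm of the form $\big\|(\sum_\ga|\cdot|^r)^{1/r}\big\|_{L^2}$. The bumps $\Psi_{G_1,k_1}^\la(\cdot/2^\ga)$, $k_1\in\bbzn$, have bounded overlap, so for each fixed $\ga$ Plancherel and the $\ell^\infty$-bound give $\big\|\sum_{k_1}b_{\GGG,\kkk}^{\la,\ga,\mu}L_{G_1,k_1}^{\la,\ga}f_1^{\la,\ga,\mu}\big\|_{L^2}\ls A_{\GGG,\la,\mu}2^{\la n/2}\|f_1^{\la,\ga,\mu}\|_{L^2}$. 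Since $r\le 2$, Minkowski's inequality yields $\big\|(\sum_\ga|\sum_{k_1}b_{\GGG,\kkk}^{\la,\ga,\mu}L_{G_1,k_1}^{\la,\ga}f_1^{\la,\ga,\mu}|^r)^{1/r}\big\|_{L^2}\le(\sum_\ga\|\sum_{k_1}\cdots\|_{L^2}^r)^{1/r}$, and summing the previous pointwise-in-$\ga$ bound in $\ell^r_\ga$ completes (1).

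For part (2), the reduction above, combined with $2/l\le 1$, the embedding $L^{2/l}\hookleftarrow L^2\times\cdots\times L^2$ ($l$ factors), and H\"older in $\ga$ (with $l$ equal exponents), reduces the claim to the fixed-$\ga$ $l$-linear estimate
\[
\Big\|\sum_{(k_1,\dots,k_l)}b_{\GGG,\kkk}^{\la,\ga,\mu}\prod_{j=1}^l L_{G_j,k_j}^{\la,\ga}h_j\Big\|_{L^{2/l}}\ls 2^{\la ln/2}\,A_{\GGG,\la,\mu}^{1-\theta_l}B_{\GGG,\la,\mu,q}^{\theta_l}\prod_{j=1}^l\|h_j\|_{L^2}
\]
uniformly in $\ga\in\bbz$, where $\theta_l=\tf{(l-1)q}{2l}$. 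The idea is to group the sum over $(k_1,\dots,k_l)$ according to the value of $k_1+\cdots+k_l\in\bbzn$: the resulting pieces have Fourier supports of bounded overlap, supplying the orthogonality normally furnished by a Littlewood--Paley square function. The constant $A^{1-\theta_l}B^{\theta_l}$ is then produced by splitting $|b_{\GGG,\kkk}^{\la,\ga,\mu}|=|b_{\GGG,\kkk}^{\la,\ga,\mu}|^{1-\theta_l}\,|b_{\GGG,\kkk}^{\la,\ga,\mu}|^{\theta_l}$, estimating the first factor by $A^{1-\theta_l}$, and distributing the remaining $\ell^q$-mass over the $l$ factors through mixed-norm H\"older inequalities on the coefficient array; the exponent $\theta_l=\frac{(l-1)q}{2l}$ is precisely the one for which these inequalities close, and the hypothesis $q<\frac{2m}{m-1}\le\frac{2l}{l-1}$ (valid since $l\le m$) is exactly what keeps $\theta_l<1$, so that the $\ell^\infty$-exponent $1-\theta_l$ is admissible. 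The main obstacle is this last estimate: one is forced to work in $L^{2/l}$ with $2/l\le 1$, below the range in which Rubio-de-Francia-type square-function inequalities encode multilinear orthogonality automatically, so the quantitative $\ell^q$-decay of the coefficients --- and the precise restriction $q<\frac{2m}{m-1}$ --- must be used by hand.
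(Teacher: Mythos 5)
First, a point of reference: the paper does not prove this proposition at all; it imports it verbatim from \cite[Proposition 2.4]{Paper1}, so your attempt must be measured against the argument given there. Your part (1) is essentially correct and follows the natural route: freeze the finitely many small indices $k_2,\dots,k_m$ (a set of cardinality $O_{m,n}(1)$), majorize $\sup_{\ga}|L^{\la,\ga}_{G_j,k_j}f_j|$ by $2^{\la n/2}\mathcal{M}f_j$ using the integrable radial majorant of $(\Psi^{\la}_{G_j,k_j})^{\vee}$, apply H\"older with $m$ copies of $L^2$, and for the $k_1$-sum use the almost disjointness of the supports of $\Psi^{\la}_{G_1,k_1}(\cdot/2^{\ga})$ together with Plancherel and Minkowski's inequality (valid since $r\le 2$). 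This yields the constant $A_{\GGG,\la,\mu}2^{\la mn/2}$ as claimed.

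Part (2), however, is where the entire content of the proposition lies, and there your argument has a genuine gap. The reduction to a fixed-$\ga$ $l$-linear estimate (peeling off the $j>l$ factors, using subadditivity of $\Vert\cdot\Vert_{L^{2/l}}^{2/l}$ and H\"older in $\ga$ with $l$ equal exponents) is fine, but the resulting core estimate
\[
\Big\Vert \sum_{(k_1,\dots,k_l)} b_{\GGG,\kkk}^{\la,\ga,\mu}\prod_{j=1}^{l}L^{\la,\ga}_{G_j,k_j}h_j\Big\Vert_{L^{2/l}}\lesssim 2^{\la ln/2}A_{\GGG,\la,\mu}^{1-\theta_l}B_{\GGG,\la,\mu,q}^{\theta_l}\prod_{j=1}^{l}\Vert h_j\Vert_{L^2}
\]
is asserted rather than proved, and the two mechanisms you invoke do not combine into a proof as described. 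Grouping by $k_1+\cdots+k_l$ does give output Fourier supports of bounded overlap, but that furnishes orthogonality only in $L^2$; in $L^{2/l}$ with $2/l\le 1$ (i.e.\ for every $l\ge 2$) bounded overlap of Fourier supports yields nothing, as your own closing sentence concedes. Likewise, ``distributing the $\ell^q$-mass over the $l$ factors through mixed-norm H\"older inequalities on the coefficient array'' is a description, not a derivation: the exponent $\theta_l=\frac{(l-1)q}{2l}$ does not emerge from any H\"older splitting of the coefficients alone. In \cite{Paper1} this estimate is obtained differently: one decomposes the coefficients into level sets $\{\kkk: |b^{\la,\ga,\mu}_{\GGG,\kkk}|\sim A2^{-s}\}$, bounds their cardinalities $N_s\le (B/A)^q 2^{sq}$ by Chebyshev from the $\ell^q$ hypothesis, and invokes a separate counting lemma stating that an $l$-linear form of this type with at most $N$ nonvanishing coefficients of modulus at most $A_0$ has $L^2\times\cdots\times L^2\to L^{2/l}$ norm $\lesssim A_0 N^{\frac{l-1}{2l}}2^{\la ln/2}$; summing $A2^{-s}N_s^{(l-1)/(2l)}$ over $s\ge 0$ converges precisely when $q<\frac{2l}{l-1}$ and produces $A^{1-\theta_l}B^{\theta_l}$. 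That counting lemma --- proved by combining, coordinate by coordinate, a square-function bound with the trivial bound and taking a geometric mean --- is the missing ingredient; without it or an equivalent substitute, part (2) is not established.
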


In view of (\ref{lgkest}), (\ref{L2}) and Proposition \ref{keyapplication1}, we actually obtain
\begin{align}\label{1mainprop}
&\Big\Vert \Big( \sum_{\ga\in\bbz}\Big| \sum_{\kkk\in\UU_{1}^{\la+\mu}}b_{\GGG,\kkk}^{\la,\ga,\mu}\prod_{j=1}^{m}L_{G_j,k_j}^{\la,\ga}f_j\Big|^2\Big)^{1/2}\Big\Vert_{L^{2/m}}\nonumber \\&\lesssim A_{\GGG,\la,\mu}\mu^{1/2}2^{\la mn/2}(\la+1)^{1/2}\prod_{j=1}^{m}\Vert f_j\Vert_{L^2}
\end{align}   
and for $2\le l\le m$
\begin{align}\label{2mainprop}
&\Big\Vert  \sum_{\ga\in\bbz}\Big| \sum_{\kkk\in\UU_{l}^{\la+\mu}}b_{\GGG,\kkk}^{\la,\ga,\mu}\prod_{j=1}^{m}L_{G_j,k_j}^{\la,\ga}f_j\Big| \Big\Vert_{L^{2/m}}\nonumber\\
&\lesssim A_{\GGG,\la,\mu}^{1-\frac{(l-1)q}{2l}}B_{\GGG,\la,\mu,q}^{\frac{(l-1)q}{2l}} \mu^{l/2}2^{\la mn/2}(\la+1)^{l/2}    \prod_{j=1}^{m}\Vert f_{j}\Vert_{L^2}.
\end{align}

\section{An auxiliary  lemma}

We have the following extension of Lemma~5 in \cite{BH}.

\begin{lm}\label{AUX}
Let $1<q\le \infty$ and $\Om \in L^q(\mathbb S^{mn-1})$. Suppose $1<p_1,\dots , p_m<\nf$ and $1/m<p<\infty$ satisfies $1/p=1/p_1+\cdots +1/p_m$ and 
\begin{equation}\label{AUXassume}
\f 1p< \f{1}{q}+\f{m}{q' }.
\end{equation}
 Then the maximal operator
\[
\mathcal M_\Om(f_1,\dots, f_m)(x)=
\sup_{R>0}  \f{1}{R^{mn} } \idotsint\limits_{|\vec y|\le R}   |\Om (\yyy' ) | 
\prod_{j=1}^m \big|  f_j(x-y_j) \big|  ~ d\yyy
\]
maps $L^{p_1}(\mathbb R^n) \times \cdots \times L^{p_m}(\mathbb R^n) $ to 
$L^{p }(\mathbb R^n) $ with norm  bounded by a constant multiple of  $\|\Om\|_{L^q(\mathbb{S}^{mn-1})}$.
\end{lm}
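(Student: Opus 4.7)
The plan is to extend the bilinear approach of \cite{BH} (for $m=2,\,q=\infty$) by reducing the $m$-linear rough maximal operator, via polar coordinates and H\"older's inequality on the sphere, to a pointwise majorant built from Hardy--Littlewood maximal functions of powers of the $f_j$'s. First I would pass to polar coordinates $\vec y=\rho\vec\theta$ to get
\[
\mathcal M_\Omega(\vec f)(x) = \sup_{R>0} \frac{1}{R^{mn}}\int_0^R \rho^{mn-1}\int_{\mathbb S^{mn-1}} |\Omega(\vec\theta)| \prod_{j=1}^m |f_j(x-\rho\theta_j)|\, d\sigma(\vec\theta)\, d\rho,
\]
and then apply H\"older on $\mathbb S^{mn-1}$ with weights $q,r_1,\dots,r_m$ satisfying $\sum_j 1/r_j=1/q'$ to separate the factor $\|\Omega\|_{L^q(\mathbb S^{mn-1})}$ from the $f_j$'s.

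The second ingredient is the $\rho$-uniform spherical-average estimate
\[
\int_{\mathbb S^{mn-1}} |f_j(x-\rho\theta_j)|^{r}\, d\sigma(\vec\theta) \lesssim M(|f_j|^{r})(x),
\]
obtained by projecting the surface measure onto the marginal variable $\theta_j\in\mathbb R^n$ (the density is bounded on the unit ball by a factor of the form $(1-|\theta_j|^2)^{((m-1)n-2)/2}$) followed by the rescaling $z_j=\rho\theta_j$. The $\rho$-independence of the right-hand side is what allows the radial integration to cancel the prefactor $R^{-mn}$, leaving the pointwise majorization
\[
\mathcal M_\Omega(\vec f)(x) \lesssim \|\Omega\|_{L^q(\mathbb S^{mn-1})} \prod_{j=1}^m \big(M(|f_j|^{r_j})(x)\big)^{1/r_j}.
\]
The target $L^p$ estimate would then follow from H\"older in $x$ together with the strong $(p_j/r_j,p_j/r_j)$-boundedness of $M$, provided $r_j<p_j$.

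The hard part will be checking the feasibility of choosing exponents $r_j\in[1,p_j)$ with $\sum 1/r_j=1/q'$: this is automatic only when $p>q'$, which is strictly stronger than the hypothesis $1/p<1/q+m/q'$ whenever $p\le q'$. To bridge the gap I would split into two regimes. For $p\ge 1$, Minkowski's integral inequality can be applied first to pull the sphere integral outside the $L^p$-norm, reducing matters to a direction-by-direction $L^p$ bound on $\sup_R R^{-mn}\int_0^R\rho^{mn-1}\prod_j|f_j(x-\rho\theta_j)|\,d\rho$ (which can be handled by Jensen and H\"older in the product variables), producing the constant $\|\Omega\|_{L^1(\mathbb S^{mn-1})}\lesssim\|\Omega\|_{L^q(\mathbb S^{mn-1})}$; for $m\ge 2$ and $q>1$ one has $1/q+m/q'>1\ge 1/p$ automatically, so the hypothesis is vacuous here. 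For $p<1$ I would dyadically decompose $\Omega=\sum_k\Omega_k$ according to the level sets $E_k=\{|\Omega|\sim 2^k\}$, invoke the quasi-triangle inequality $\|\sum_k g_k\|_p^p\le\sum_k\|g_k\|_p^p$, and balance the bound $\|\Omega_k\|_\infty\lesssim 2^k$ against the level-set measure $|E_k|\lesssim 2^{-kq}\|\Omega\|_{L^q}^q$; the geometric series produced by this trade-off converges precisely at the threshold $1/p<1/q+m/q'$, which is how the sharp index is generated.
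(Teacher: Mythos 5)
Your final plan is essentially the paper's proof: the $\|\Omega\|_{L^1(\mathbb S^{mn-1})}$-based strong bound for target exponents $r>1$ is obtained, exactly as in the paper, by H\"older in the radial/angular variables with weights $\mu_j$ summing to $1$, the directional maximal operators $\mathcal M^{\theta_j}_{\mu_j}$, Minkowski's inequality, and the method of rotations; the case $p<1$ is handled by the level-set decomposition $\Omega=\sum_l \Omega_l$ with $\|\Omega_l\|_{L^\infty}\lesssim 2^l$ and $\|\Omega_l\|_{L^1}\lesssim 2^{-lq/q'}$, followed by an interpolation whose convergence threshold is precisely \eqref{AUXassume}. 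Two caveats. First, your ``second ingredient'' $\int_{\mathbb S^{mn-1}}|f_j(x-\rho\theta_j)|^r\,d\sigma(\vec\theta)\lesssim \mathcal M(|f_j|^r)(x)$ is false when $(m-1)n=1$ (i.e.\ $m=2$, $n=1$): the marginal density $(1-t^2)^{-1/2}$ is unbounded, and testing with $f=\chi_{[1-\epsilon,1]}$ at $x=0$, $\rho=1$ gives $\sim\sqrt{\epsilon}$ on the left against $\sim\epsilon$ on the right. Since your restructured argument never uses this estimate, it should simply be dropped. Second, ``balancing'' the two bounds on $\mathcal M_{\Omega_l}$ in the regime $p<1$ cannot be a pointwise geometric mean of operator norms at the same exponents; it must be a genuine multilinear real interpolation (the paper invokes Theorem 7.2.2 of \cite{MFA}) between the strong $L^{r_1}\times\cdots\times L^{r_m}\to L^r$ bound with $r>1$ and constant $2^{-lq/q'}$, and the \emph{weak-type} endpoint $L^1\times\cdots\times L^1\to L^{1/m,\infty}$ with constant $2^l$ --- the latter coming from the pointwise domination $\mathcal M_{\Omega_l}(f_1,\dots,f_m)\lesssim 2^l\prod_j\mathcal Mf_j$, H\"older's inequality for weak spaces, and the weak $(1,1)$ bound for $\mathcal M$. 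One must also record that $r$ has to be taken close enough to $1$ depending on $p$: the interpolated constant decays like $2^{-l(\frac{q(m-1/p)}{q'(m-1/r)}-\frac{1/p-1/r}{m-1/r})}$, whose exponent is positive iff $1/p<1/(rq)+m/q'$, which recovers \eqref{AUXassume} only in the limit $r\to 1^+$.
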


\begin{proof} 
Since $\Vert \Omega\Vert_{L^r(\mathbb{S}^{mn-1})}\lesssim \Vert \Omega\Vert_{L^{\infty}(\mathbb{S}^{mn-1})}$ for all $1<r<\infty$ and there exists $1<q<\infty$ such that $1/p<1/q+m/q'<m~ (=1/\infty+m/1)$,
we may assume $1<q<\infty$. Without loss of generality, we may also assume that $\|\Om\|_{L^q(\mathbb{S}^{mn-1})}=1$. 

We split 
\[
\Om= \Om_0+\sum_{l=1}^\nf \Om_l ,
\]
where $\Om_0=\Om \chi_{|\Om|\le 2}$ and $\Om_l=\Om \chi_{2^l<|\Om|\le 2^{l+1} }$ for $l\ge 1$. 
Then H\"older's inequality and Chebyshev's inequality give
\[
\|\Om_l\|_{L^1}  \le |\textup{Supp} \, \Om_l|^{\f{1}{q'}} \le \|\Om\|_{L^q}^{\frac{q}{q'}} 2^{-l \f{q}{q'}} =2^{-l \f{q}{q'}} 
\]
and obviously 
\begin{equation}\label{Omegainfty}
\|\Om_l \|_{L^\nf} \le 2^{l+1}.
\end{equation}

We first claim that for $1<r,r_1,\dots,r_m<\infty$ with $1/r=1/r_1+\cdots+1/r_m$,
\begin{equation}\label{664455-1}
\big\|\mathcal M_{\Om_l} \big\|_{L^{r_1}\times \cdots \times L^{r_m}\to L^{r}} 
\lesssim \|\Om_l\|_{L^1(\mathbb{S}^{mn-1})}  \lesssim 2^{-l \f{q}{q'}}. 
\end{equation}

To verify this estimate,  
we choose indices $\mu_1,\dots,\mu_m$ satisfying $$1/\mu_1+\cdots+1/\mu_m=1$$ and
$$1<\mu_j<r_j \qq \text{ for each }~ 1\le j\le m. $$
Then a direct computation using H\"older's inequality yields 
\[
\mathcal M_{\Om_l}\big(f_1,\dots, f_m\big)(x) \le  \int_{\mathbb S^{mn-1}}   \big|\Om_l (\vec \theta\,  )\big| \prod_{j=1}^{m}\mathcal{M}_{\mu_j}^{\theta_j}f_j(x)    ~ d\vec \theta\, ,
\] 
where the directional maximal operator $\mathcal{M}_{\mu_j}^{\theta_j}$ is defined by
\[
\mathcal M_{\mu_j}^{\theta_j} f_j(x)  := \sup_{R>0}\Big(  \f{1}{R  }\int_0^R \big|  f (x-t\theta_j) \big|^{\mu_j} ~dt\Big)^{1/\mu_j}. 
\]
It follows from this that 
\[
 \big\| \mathcal M_{\Om_l}(f_1,\dots, f_m)\big\|_{L^{r}} \le  \int_{\mathbb S^{mn-1}}   |\Om (\vec \theta\,  ) |  \prod_{j=1}^m  
\big\| \mathcal M_{\mu_j}^{\theta_j}f_j  \big\|_{L^{r_j}}  ~ d\vec \theta,
\]
where Minkowski's inequality and H\"older's inequality are applied.
Using the $L^{r_j}$ boundedness of $\mathcal M_{\mu_j}^{\theta_j} $ for $0<\mu_j<r_j$ with constants independent of $\theta_j$
(by the method of rotations), we obtain \eqref{664455-1}.

Then the case $p>1$ (for which $q>1$ implies the assumption (\ref{AUXassume})) in the assertion follows from summing the estimates (\ref{664455-1}) over $l\ge 0$.

The other case $1/m<p\le 1$ can be proved by interpolation with the $L^1\times\cdots\times L^1\to L^{1/m,\infty}$ estimate.
Let $\mathcal{M}$ be the Hardy-Littlewood maximal operator.
Then it is easy to see the pointwise estimate
$$\mathcal M_{\Om_l}\big(f_1,\dots , f_m\big)(x) \le 2^{l+1} \prod_{j=1}^m \mathcal M f_j(x).$$
and this proves that
\begin{equation}\label{664455}
\big\|\mathcal M_{\Om_l} \big\|_{L^1\times \cdots \times L^1\to L^{1/m,\nf}} \lesssim 2^l, 
\end{equation}
 using H\"older's inequality for weak type spaces (\cite[p 16]{CFA}), the estimate (\ref{Omegainfty}), and the weak $(1,1)$ boundedness of $\mathcal{M}$.
 Now we fix $0<p_1,\dots,p_m<\infty$ and $1/m<p\le 1$, and choose $r>1$ such that 
\begin{equation*}
\frac{1}{p}<\frac{1}{rq}+\frac{m}{q'} \,\,\,\Big( \!\!<\frac{1}{q}+\frac{m}{q'} \Big),
\end{equation*}
or,  equivalently,
$$\frac{q(m-1/p)}{q'(m-1/r)}-\frac{1/p-1/r}{m-1/r}>0.$$
Then the interpolation between (\ref{664455}) and (\ref{664455-1}) with appropriate $(r_1,\dots, r_m)$ satisfying $1/r=1/r_1+\dots+1/r_m$ (using Theorem  7.2.2 in \cite{MFA}) yields
\begin{equation*}
\big\|\mathcal M_{\Om_l} \big\|_{L^{p_1}\times \cdots \times L^{p_m}\to L^{p }}   \lesssim 2^{l \f{1/p-1/r}{m-1/r}} 2^{-l \f{q}{q'} \f{m-1/p}{m-1/r} }=2^{-l(\frac{q(m-1/p)}{q'(m-1/r)}-\frac{1/p-1/r}{m-1/r})}
\end{equation*}
Finally, the exponential decay in $l$ together 
with the fact that $\|\cdot \|_{L^p}^p$ is a subadditive quantity for $0<p\le 1$ implies the claimed conclusion. 
\end{proof}

\section{Proof of Theorem \ref{MAXSINGINT}}

Let $\frac{2m}{m+1}<q<2$ and $\Om$ in $L^q(\mathbb S^{mn-1})$.
We  use a dyadic decomposition introduced by Duoandikoetxea and Rubio de Francia \cite{Du_Ru}.
We choose a Schwartz function $\Phi^{(m)}$ on $(\bbrn)^m$ such that its Fourier transform $\wh{\Phi^{(m)}}$ is supported in the annulus $\{\xxxi\in (\bbrn)^m: 1/2\le |\xxxi|\le 2\}$ and satisfies
$\sum_{j\in\bbz}\wh{\Phi^{(m)}_j}(\xxxi)=1$ for $\xxxi\not= \0$ where $\wh{\Phi_j^{(m)}}(\xxxi):=\wh{\Phi^{(m)}}(\xxxi/2^j)$.
For $\ga\in\bbz$  let
 $$ K^{\ga}(\yyy):=\wh{\Phi^{(m)}}(2^{\ga}\yyy)K(\yyy), \quad \yyy\in (\bbrn)^m$$
 and then we observe that $K^\ga(\yyy)=2^{\ga mn} K^0(2^\ga \yyy)$.
 For $\mu\in\bbz$ we define
 \begin{equation}\label{kernelcharacter}
K_{\mu}^{\ga}(\yyy):=\Phi_{{\mu}+\ga}^{(m)}\ast K^{\ga}(\yyy)=2^{\ga mn}[\Phi_{{\mu}}^{(m)}\ast K^{0}](2^\ga \yyy).
\end{equation}
It follows from this definition that 
$$
\wh{K^\ga_\mu}(\xxxi)= \wh{\Phi^{(m)}}(2^{-(\mu+\ga)}\xxxi)\wh{K^0}(2^{-\ga}\xxxi)=\wh{K^0_\mu}(2^{-\ga} \xxxi),
$$
which implies that $\wh{K^\ga_\mu}$ is bounded uniformly in $\ga$ while they have almost disjoint supports, so it is natural to add them together as follows:
 $$K_{\mu}(\yyy):=\sum_{\ga\in \bbz}{K_{\mu}^{\ga}(\yyy)}.$$

\subsection{Reduction}
%It is natural to expect that the boundedness of $\mathcal L^*_\Om$ is closely related to that of its discrete relative 
We introduce the maximal  operator
\begin{equation*}
\LL_{\Om}^{\sharp}\big(f_1,\dots,f_m\big)(x) := 
\sup_{\tau\in \mathbb Z} \bigg| \sum_{\gamma<\tau} 
\int_{(\bbrn)^m }{K^{\gamma}(\yyy)\prod_{j=1}^{m}f_j(x-y_j)}~d\,\yyy 
\bigg| 
\end{equation*}
for $x\in \mathbb R^n$. 
Then we claim that
\begin{equation}\label{Est77}
\LL_{\Om}^*\big(f_1,\dots,f_m\big)
\le \mathcal{M}_{\Omega}\big(f_1,\dots,f_m\big)(x)+  \LL_{\Om}^{\sharp}\big(f_1,\dots,f_m\big) 
\end{equation}
To prove \eqref{Est77} we  introduce the notation 
$$K^{(\epsilon)} ( \vec y\,) := K ( \vec y\,)  \chi_{|\vec y\,| \ge \epsilon}, \qq \widetilde{K}^{(\epsilon)} ( \vec y\,) := K ( \vec y\,) \big( 1-  \wh{\Theta^{(m)}} (\vec y/\epsilon) \big),$$
setting $\wh{\Theta^{(m)}}(\yyy):=1-\sum_{\ga\in\bbn}{\wh{\Phi^{(m)}}(\yyy/2^{\ga})}$ so that \[
\textup{Supp}(\wh{\Theta^{(m)}})\subset \{\yyy\in (\bbrn)^m:|\yyy|\le 2\}
\]
 and $\wh{\Theta^{(m)}}(\yyy)=1$ for $|\yyy|\le 1$.

Given $\epsilon>0$ choose $\rho\in \mathbb Z$ such that $2^{\rho}\le \epsilon<2^{\rho+1} $. 
Then we write
\begin{align}
&\bigg| \int_{(\bbrn)^m\setminus B(0,\epsilon)}{K(\yyy)\prod_{j=1}^{m}f_j(x-y_j)}~d\,\yyy \bigg| 
\notag\\
&\qquad\qquad \le 
\bigg| \int_{(\bbrn)^m }{\big( K^{(\epsilon)}(\yyy) - \widetilde{K}^{(2^{\rho})}(\yyy) \big) \prod_{j=1}^{m}f_j(x-y_j)}~d\,\yyy \bigg| \label{651} \\
&\qquad \qquad\qquad \qquad+ \bigg| \int_{(\bbrn)^m}   \widetilde{K}^{(2^{\rho})}(\yyy) \prod_{j=1}^{m}f_j(x-y_j) ~d\,\yyy\bigg| \label{652}.
\end{align}
Term \eqref{652} is clearly less than
\begin{equation*}
\bigg|\sum_{\ga\in\bbz: \ga<-\rho}\int_{(\bbrn)^m}K^{\ga}(\yyy) \prod_{j=1}^{m}f_j(x-y_j) ~d\,\yyy   \bigg|\le \mathcal{L}_{\Omega}^{\sharp}\big(f_1,\dots,f_m\big)(x),
\end{equation*}
while \eqref{651} is controlled by
$
 \mathcal{M}_{\Omega}\big(f_1,\dots,f_m\big)(x)
$
as
\begin{equation*}
\big|K^{(\epsilon)}(\yyy) - \widetilde K^{(2^{\rho})}(\yyy) \big|\lesssim |K(\yyy)| \chi_{|\yyy|\approx 2^{\rho}}\lesssim \frac{|\Omega(\yyy')|}{2^{\rho mn}}\chi_{|\yyy|\lesssim 2^{\rho}}.
\end{equation*}
Thus \eqref{Est77} follows after taking the supremum over all $\epsilon>0$. 

Since  the boundedness of $\mathcal{M}_{\Omega}$ follows from Lemma~\ref{AUX} with the fact that $q>\f{2m}{m+1}$ implies $\f{m}{2} <\f{1}{q}+\f{m}{q'}$, 
matters reduce to the boundedness of $\LL_{\Om}^{\sharp}$. 

For each $\ga\in\bbz$ let $$K_{\mu}:=\sum_{\ga\in\bbz}K_{\mu}^{\ga}.$$
In  the study of multilinear rough singular integral operators $\mathcal{L}_{\Omega}$ in \cite{Paper1} whose kernel is $\sum_{\gamma\in\bbz}K^{\ga}=\sum_{\mu\in\bbz}\sum_{\ga\in\bbz}K_{\mu}^{\ga}=\sum_{\mu\in\bbz}K_{\mu}$,
the part where $\mu$ is less than a constant is relatively simple because the Fourier transform of $K_{\mu}$ satisfies the estimate 
\begin{equation}\label{symbolest}
\big|\partial^{\alpha}\widehat{K_{\mu}}(\xxxi)\big|\lesssim \Vert \Omega\Vert_{L^q(\mathbb{S}^{mn-1})}|\xxxi|^{-|\alpha|}Q(\mu),\qq 1<q\le \infty
\end{equation}
for all multiindices $\alpha$ and $\xxxi\in\bbr^{mn}\setminus \{0\}$, where  $Q(\mu)=2^{(mn-\delta')\mu}$ if $\mu\ge 0$ and $Q(\mu)=2^{\mu(1-\delta')}$ if $\mu<0$ for some $0<\delta'<1/q'$, which is the condition of the Coifman-Meyer multiplier theorem \cite{CM}, \cite[Theorem 7.5.3]{MFA} with constant $\Vert \Omega\Vert_{L^q(\bbs^{mn-1})}Q(\mu)$.
The remaining case when $\mu$ is large enough was handled by using product-type wavelet decompositions.
We expect a similar strategy would work in handling $\LL_{\Om}^{\sharp}$.

To argue strictly, we write
\begin{equation*}
  \LL_{\Om}^{\sharp}\big(f_1,\dots,f_m\big)  
  \le
 \widetilde   \LL_{\Om}^{\sharp}\big(f_1,\dots,f_m\big)  + \sum_{\mu\in\bbz:2^{\mu-10}>C_0\sqrt{mn}} 
  \LL_{\Om,\mu}^{\sharp}\big(f_1,\dots,f_m\big) ,
  \end{equation*}
where we set
$$
 \widetilde   \LL_{\Om}^{\sharp}\big(f_1,\dots,f_m\big) (x) 
 :=
  \sup_{\tau \in \mathbb Z} \bigg| 
  \int_{(\bbrn)^m }{  \sum_{ \gamma<\tau} \sum_{\mu\in\bbz:2^{\mu-10}\le C_0\sqrt{mn}} K^{\gamma}_{\mu}(\yyy)   \prod_{j=1}^{m}f_j(x-y_j)}~d\,\yyy  
  \bigg| 
$$
and
$$
  \LL_{\Om,\mu}^{\sharp}\big(f_1,\dots,f_m\big)(x) 
  := \sup_{\tau\in \mathbb Z} \bigg| 
  \sum_{\gamma<\tau} 
  \int_{(\bbrn)^m }{  K^{\gamma}_{\mu}(\yyy)   \prod_{j=1}^{m}f_j(x-y_j)}~d\,\yyy  
  \bigg| .
  $$
  Then Theorem \ref{MAXSINGINT} follows from the following two propositions:
  \begin{prop}\label{propo1}
  Let $1<p_1,\dots,p_m\le \infty$ and $1/p=1/p_1+\cdots+1/p_m$. Suppose that  $1<q<\infty$ and $\Omega\in L^q(\mathbb{S}^{mn-1})$ with $\int_{\mathbb{S}^{mn-1}}\Omega d\sigma =0$. Then there exists a constant $C>0$ such that
  \begin{equation*}
  \big\Vert \widetilde{\LL}_{\Omega}^{\sharp}(f_1,\dots,f_m)\big\Vert_{L^p}\le C\Vert \Omega\Vert_{L^q(\mathbb{S}^{mn-1})}\prod_{j=1}^{m}\Vert f_j\Vert_{L^{p_j}}
  \end{equation*}
  for Schwartz functions $f_1,\dots,f_m$ on $\bbrn$.
  \end{prop}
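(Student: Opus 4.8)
The plan is to control $\widetilde{\LL}_{\Om}^{\sharp}$ by exploiting the fact that the kernel $\sum_{\mu:\,2^{\mu-10}\le C_0\sqrt{mn}} K_\mu^\gamma$ is, after summing over $\gamma<\tau$, essentially a smooth truncated Calder\'on--Zygmund kernel whose symbol obeys Coifman--Meyer type bounds. First I would set $K^{<\tau}(\vec y):=\sum_{\gamma<\tau}\sum_{\mu:\,2^{\mu-10}\le C_0\sqrt{mn}}K_\mu^\gamma(\vec y)$ and observe that $\sum_{\mu:\,2^{\mu-10}\le C_0\sqrt{mn}}\widehat{K_\mu}(\vec\xi)=\widehat{K_{\le\mu_0}}(\vec\xi)$ satisfies \eqref{symbolest} with the \emph{summed} constant $\sum_{\mu\le\mu_0}Q(\mu)\lesssim \Vert\Om\Vert_{L^q}$ (the geometric series in $\mu<0$ converges, and there are only finitely many $\mu\ge 0$ terms), so that $K_{\le\mu_0}:=\sum_{\mu\le\mu_0}K_\mu$ is a bilinear/$m$-linear Coifman--Meyer multiplier. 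Since $\widehat{K^\gamma_\mu}(\vec\xi)=\widehat{K^0_\mu}(2^{-\gamma}\vec\xi)$ and these have almost disjoint supports in $\gamma$, the partial sum $\sum_{\gamma<\tau}$ corresponds to multiplying $\widehat{K_{\le\mu_0}}$ by a smooth cutoff to $\{|\vec\xi|\gtrsim 2^{-\tau}\}$; equivalently, $K^{<\tau}=\Phi^{(m)}_{*,\tau}*K_{\le\mu_0}$ for a suitable Littlewood--Paley projection, and then I would relate the supremum over $\tau$ to a maximal truncation of the Coifman--Meyer operator with kernel $K_{\le\mu_0}$.

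The key step is then to recognize that $\widetilde{\LL}_{\Om}^{\sharp}$ is pointwise dominated by the maximal truncated singular integral associated to a \emph{standard} (smooth, with the right size/regularity estimates) $m$-linear Calder\'on--Zygmund kernel, up to an error controlled by $\mathcal M_\Om$ (or by the Hardy--Littlewood maximal operators via the rapidly decreasing tails of $\Phi^{(m)}$). I would invoke the now-classical Cotlar-type inequality for multilinear Calder\'on--Zygmund operators: the maximal truncations $\sup_\tau|\int_{|\vec y|\ge 2^{-\tau}}K_{\le\mu_0}(\vec y)\prod f_j(x-y_j)\,d\vec y|$ are bounded on $L^{p_1}\times\cdots\times L^{p_m}\to L^p$ for $1<p_j\le\infty$ (and $p>1/m$) with norm a constant multiple of the Coifman--Meyer constant, hence $\lesssim\Vert\Om\Vert_{L^q}$. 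Here the cancellation $\int_{\mathbb S^{mn-1}}\Om\,d\sigma=0$ is exactly what guarantees $\widehat{K_{\le\mu_0}}$ is bounded at the origin (rather than merely the derivative bounds), which is needed both for the multiplier estimate \eqref{symbolest} at $\alpha=0$ and for the maximal truncation bound. The smoothing that replaces the sharp cutoff $\chi_{|\vec y|\ge\epsilon}$ by $\widehat{\Theta^{(m)}}(\vec y/\epsilon)$ contributes only a term dominated by $\mathcal M_\Om$, handled exactly as in \eqref{Est77}.

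I expect the main obstacle to be assembling a clean statement of the multilinear maximal-truncation theorem with the precise hypotheses needed and with constant controlled by $\Vert\Om\Vert_{L^q}$ through \eqref{symbolest}: the multilinear Cotlar inequality and the $L^{p_1}\times\cdots\times L^{p_m}\to L^p$ boundedness of maximal truncations of $m$-linear CZ operators are known (e.g.\ via the approach of Grafakos--Torres together with a pointwise domination argument, or by sparse domination), but one must check that the relevant ``CZ constant'' is the Coifman--Meyer constant $\Vert\Om\Vert_{L^q}Q(\mu)$ summed over $\mu\le\mu_0$, and that the endpoint cases $p_j=\infty$ are permitted. A secondary technical point is justifying the identity $\sum_{\gamma<\tau}K_\mu^\gamma = $ (smoothly truncated $K_\mu$) rigorously, i.e.\ that the supremum over $\tau\in\mathbb Z$ of the partial sums matches the maximal truncation of $K_{\le\mu_0}$ up to acceptable errors; this is a routine but slightly fiddly Fourier-support bookkeeping using that $\widehat{\Phi^{(m)}}$ is supported in $\{1/2\le|\vec\xi|\le 2\}$ and $\sum_{j\in\bbz}\widehat{\Phi_j^{(m)}}=1$. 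Once these ingredients are in place, summing over the finitely-relevant and the geometrically-decaying values of $\mu$ finishes the proof.
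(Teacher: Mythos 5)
Your proposal follows essentially the same route as the paper: split $\widetilde{\LL}_\Omega^\sharp$ into the maximal \emph{sharp} truncation $T_{\widetilde K}^*$ of the Calder\'on--Zygmund kernel $\widetilde K=\sum_{\mu:2^{\mu-10}\le C_0\sqrt{mn}}K_\mu$, handled by the multilinear Cotlar inequality of Grafakos--Torres with CZ constant $\lesssim \Vert\Omega\Vert_{L^q}$ coming from \eqref{symbolest}, plus an error term, handled by pointwise domination against $\prod_j\mathcal M f_j$. The symbol computation and the invocation of Cotlar are exactly what the paper does.

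The one place where your sketch glosses over a real difficulty is the error term. You claim it is ``handled exactly as in \eqref{Est77},'' but it is not: in \eqref{Est77} the difference $K^{(\epsilon)}-\widetilde K^{(2^\rho)}$ is supported on a thin annulus $|\vec y\,|\approx 2^\rho$, so $\mathcal M_\Omega$ takes care of it at once. Here the discrepancy between $\sum_{\gamma<\tau}\sum_\mu K^\gamma_\mu$ and $\widetilde K\,\chi_{|\vec y\,|>2^{-\tau}}$ is the kernel
\[
K_\tau^{**}(\vec y\,)=\sum_{\mu:2^{\mu-10}\le C_0\sqrt{mn}}\Big(\sum_{\gamma<\tau}K^\gamma_\mu(\vec y\,)\chi_{|\vec y\,|\le 2^{-\tau}}-\sum_{\gamma\ge\tau}K^\gamma_\mu(\vec y\,)\chi_{|\vec y\,|>2^{-\tau}}\Big),
\]
which is \emph{not} compactly supported in $|\vec y\,|$, so $\mathcal M_\Omega$ alone does not close the argument. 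The paper splits it into two pieces ($\II_{\mu,\tau}$ and $\JJ_{\mu,\tau}$) and, crucially, uses the vanishing mean of $\Omega$ in $\JJ_{\mu,\tau}$ to gain a factor $2^{\mu}$ (via $\Phi_\mu\ast\Omega\,d\sigma = \int(\Phi_\mu(\cdot-\vec z\,)-\Phi_\mu(\cdot))\Omega(\vec z\,')d\vec z$ and a mean value bound on $\nabla\Phi$), together with the tail decay $|\nabla\Phi(\vec w)|\lesssim (1+|\vec w|)^{-M}$ with $mn<M<mn+1$ to make both the $\gamma$-sum and the $\vec y$-integral converge. So the cancellation $\int_{\mathbb S^{mn-1}}\Omega\,d\sigma=0$ is needed not only to make the multiplier bounded at the origin, as you say, but also in the kernel-side estimate of the error term. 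With that correction, the rest of your plan goes through and matches the paper's proof.
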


  \begin{prop}\label{propo2}
  Let $\frac{2m}{m+1}<q\le \infty$ and $\Omega\in L^q(\mathbb{S}^{mn-1})$ with $\int_{\mathbb{S}^{mn-1}}\Omega d\sigma =0$. 
  Suppose that $\mu\in\bbz$ satisfies $2^{\mu-10}>C_0\sqrt{mn}$. 
  Then there exist $C, \epsilon_0>0$ such that  
\begin{equation*}
\big\|  \LL_{\Om,\mu}^{\sharp}(f_1,\dots,f_m)\big\|_{L^{2/m} } \lesssim  2^{-\epsilon_0 \mu} \| \Om\|_{L^q(\mathbb{S}^{mn-1})}\prod_{j=1}^{m}\Vert f_j\Vert_{L^2}
\end{equation*}
  for Schwartz functions $f_1,\dots,f_m$ on $\bbrn$.
 
  \end{prop}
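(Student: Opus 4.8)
The plan is to mimic the treatment of the non‑maximal piece $\vec f\mapsto\sum_{\gamma\in\bbz}T_\mu^\gamma(\vec f)$, where $T_\mu^\gamma(\vec f):=\int_{(\bbrn)^m}K_\mu^\gamma(\yyy)\prod_{j=1}^m f_j(\cdot-y_j)\,d\yyy$, carried out in \cite{Paper1}, inserting a maximal‑function argument in the truncation parameter $\tau$ at the one place it is needed. Recall that we may assume $q<2$ (as fixed at the start of this section), so that Proposition~\ref{keyapplication1} applies, since $q<2<\frac{2m}{m-1}$; recall also $\wh{K_\mu^\gamma}(\xxxi)=\wh{K_\mu^0}(2^{-\gamma}\xxxi)$ with $\wh{K_\mu^0}$ supported in $\{|\xxxi|\sim2^\mu\}$. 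Expanding $\wh{K_\mu^0}$ in the Triebel wavelet basis of Section~\ref{11171}, rescaling by $2^{-\gamma}$, and discarding the vanishing coefficients, one obtains (in the notation of \cite{Paper1}, after splitting according to the ordering of the $|k_j|$)
\[
T_\mu^\gamma(\vec f)=\sum_{\lambda\in\bbn_0}\sum_{\GGG\in\II^\lambda}\sum_{l=1}^m g_\gamma^{\lambda,\GGG,l},\qquad g_\gamma^{\lambda,\GGG,l}:=\sum_{\kkk\in\UU_l^{\lambda+\mu}}b_{\GGG,\kkk}^{\lambda,\gamma,\mu}\prod_{j=1}^m L_{G_j,k_j}^{\lambda,\gamma}f_j,
\]
where the $\vec k$‑sum is restricted to $\UU^{\lambda+\mu}=\bigsqcup_{l=1}^m\UU_l^{\lambda+\mu}$ by the Fourier support of the $\Psi_{\GGG,\kkk}^\lambda$, and the coefficients $b_{\GGG,\kkk}^{\lambda,\gamma,\mu}$ satisfy the hypotheses of Proposition~\ref{keyapplication1} with the constants $A_{\GGG,\lambda,\mu},B_{\GGG,\lambda,\mu,q}$ of \cite{Paper1}. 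Therefore
\[
\LL_{\Om,\mu}^{\sharp}(\vec f)\le\sum_{\lambda\in\bbn_0}\sum_{\GGG\in\II^\lambda}\sum_{l=1}^m\ \sup_{\tau\in\bbz}\Big|\sum_{\gamma<\tau}g_\gamma^{\lambda,\GGG,l}\Big|,
\]
and it remains to bound each summand in $L^{2/m}$ with decay in $\lambda$ and a gain of $2^{-\epsilon_0\mu}$.

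For the error terms $2\le l\le m$ nothing new is needed beyond the non‑maximal case: since $\sup_{\tau}\big|\sum_{\gamma<\tau}g_\gamma^{\lambda,\GGG,l}\big|\le\sum_{\gamma\in\bbz}\big|g_\gamma^{\lambda,\GGG,l}\big|$, the estimate \eqref{2mainprop} (resting on \eqref{lgkest}, \eqref{L2} and Proposition~\ref{keyapplication1}(2)) yields
\[
\Big\|\sup_{\tau\in\bbz}\Big|\sum_{\gamma<\tau}g_\gamma^{\lambda,\GGG,l}\Big|\Big\|_{L^{2/m}}\ls A_{\GGG,\lambda,\mu}^{1-\frac{(l-1)q}{2l}}B_{\GGG,\lambda,\mu,q}^{\frac{(l-1)q}{2l}}\,\mu^{l/2}\,2^{\lambda mn/2}(\lambda+1)^{l/2}\prod_{j=1}^m\|f_j\|_{L^2}.
\]

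The main term $l=1$ is where the supremum has to be dealt with. For $\kkk\in\UU_1^{\lambda+\mu}$ one has $|k_2|,\dots,|k_m|\le2C_0\sqrt n$, so the factors $L_{G_j,k_j}^{\lambda,\gamma}f_j$ with $j\ge2$ have Fourier support in $\{|\xi_j|\ls C_0\sqrt n\,2^{\gamma-\lambda}\}$ whereas $L_{G_1,k_1}^{\lambda,\gamma}f_1$ has Fourier support in $\{|\xi_1|\sim2^{\gamma+\mu}\}$; since $2^\mu$ is large relative to $C_0\sqrt n$ and to $m$ — this is the role of the hypothesis $2^{\mu-10}>C_0\sqrt{mn}$ — the function $g_\gamma^{\lambda,\GGG,1}$ has Fourier transform supported in the annulus $\{|\xi_1+\cdots+\xi_m|\sim2^{\gamma+\mu}\}$. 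Hence $\{g_\gamma^{\lambda,\GGG,1}\}_{\gamma\in\bbz}$ is a genuine Littlewood--Paley sequence in the output variable, each partial sum $\sum_{\gamma<\tau}g_\gamma^{\lambda,\GGG,1}$ is reproduced by convolution with a fixed Schwartz low--pass at scale $2^{-(\tau+\mu)}$, and thus $\sup_\tau\big|\sum_{\gamma<\tau}g_\gamma^{\lambda,\GGG,1}\big|$ is dominated pointwise (up to finitely many boundary pieces, controlled the same way) by the grand maximal function of $H:=\sum_\gamma g_\gamma^{\lambda,\GGG,1}$. Fixing any $0<r<2/m$, the square--function characterization of $H^{2/m}$, the Peetre maximal estimate for the frequency--localized pieces, and the Fefferman--Stein vector--valued maximal inequality at the exponents $2/(rm)$ and $2/r$ (both $>1$) give successively
\[
\Big\|\sup_{\tau\in\bbz}\Big|\sum_{\gamma<\tau}g_\gamma^{\lambda,\GGG,1}\Big|\Big\|_{L^{2/m}}\ls\Big\|\Big(\sum_{\gamma\in\bbz}\big(\mathcal M\big(|g_\gamma^{\lambda,\GGG,1}|^{r}\big)\big)^{2/r}\Big)^{1/2}\Big\|_{L^{2/m}}\ls\Big\|\Big(\sum_{\gamma\in\bbz}|g_\gamma^{\lambda,\GGG,1}|^2\Big)^{1/2}\Big\|_{L^{2/m}},
\]
$\mathcal M$ being the Hardy--Littlewood maximal operator; the right-hand side is bounded by \eqref{1mainprop}, so
\[
\Big\|\sup_{\tau\in\bbz}\Big|\sum_{\gamma<\tau}g_\gamma^{\lambda,\GGG,1}\Big|\Big\|_{L^{2/m}}\ls A_{\GGG,\lambda,\mu}\,\mu^{1/2}\,2^{\lambda mn/2}(\lambda+1)^{1/2}\prod_{j=1}^m\|f_j\|_{L^2}.
\]

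Summing these estimates over $l=1,\dots,m$, over the at most $2^{mn}$ choices of $\GGG\in\II^\lambda$, and over $\lambda\in\bbn_0$, using the subadditivity of $\|\cdot\|_{L^{2/m}}^{2/m}$, produces exactly the series that controls the non‑maximal piece in \cite{Paper1}: its convergence in $\lambda$ follows from the decay of $A_{\GGG,\lambda,\mu}$ and $B_{\GGG,\lambda,\mu,q}$ in $\lambda$ (obtained by taking the wavelet smoothness parameter $L$ large, through \eqref{lqestimate}), and the decisive factor $2^{-\epsilon_0\mu}\|\Om\|_{L^q(\bbs^{mn-1})}$ is produced by the decay in $\mu$ of the relevant Sobolev norms of $\wh{K_\mu^0}$; balancing that decay against the interpolation exponents $\frac{(l-1)q}{2l}$ is the computation of \cite{Paper1}, and is the place where the hypothesis $q>\frac{2m}{m+1}$ enters. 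I expect the only genuinely new difficulty to be the $l=1$ estimate: because the target exponent $2/m$ does not exceed $1$, the usual Littlewood--Paley and vector--valued maximal tools are not directly available there, so one has to pass through $H^{2/m}$ and lower the maximal-function exponent below $2/m$ — which is legitimate precisely because the hypothesis on $\mu$ forces the output frequencies of the $\UU_1^{\lambda+\mu}$-pieces into honest dyadic annuli.
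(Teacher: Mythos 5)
Your overall structure matches the paper's: same wavelet expansion of $\wh{K_\mu^0}$, same splitting over $\UU_l^{\la+\mu}$, same crude sup-by-sum estimate for $2\le l\le m$ via \eqref{2mainprop}, and for $l=1$ the same split into a grand--maximal piece and boundary pieces with Nikolskii/Fefferman--Stein handling the latter. That much is correct and is what the paper does.

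The gap is in the $l=1$ main piece. Writing $H:=\sum_{\ga}\TT_{\GGG,1}^{\la,\ga,\mu}(\vec f)$, your chain passes from $\|\sup_\nu|\Ga_\nu\ast H|\|_{L^{2/m}}\approx\|H\|_{H^{2/m}}$ to $\ls\|(\sum_\ga|\TT_{\GGG,1}^{\la,\ga,\mu}(\vec f)|^2)^{1/2}\|_{L^{2/m}}$ by invoking the square-function characterization of $H^{2/m}$, but that characterization is only valid \emph{modulo polynomials}: finiteness of the Littlewood--Paley square function of $H$ in $L^{2/m}$ guarantees only that $H-Q^{\la,\mu,\GGG}\in H^{2/m}$ for some polynomial $Q^{\la,\mu,\GGG}$, with $\|H-Q^{\la,\mu,\GGG}\|_{H^{2/m}}$ comparable to the square function norm. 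Since $\Ga$ has $\wh\Ga\equiv1$ near $0$, $\Ga_\nu\ast Q=Q$ for every polynomial, so a nonzero $Q^{\la,\mu,\GGG}$ would make $\sup_\nu|\Ga_\nu\ast H|$ fail to be in $L^{2/m}$, and the inequality you need would be false. The paper devotes the last part of the $l=1$ argument precisely to ruling this out: it shows $H\in H^{2/m}$ directly for Schwartz $f_j$ by the crude pointwise bound $\sup_\nu|\Ga_\nu\ast H|\ls\sum_\ga|\TT_{\GGG,1}^{\la,\ga,\mu}(\vec f)|$, whose $L^{2/m}$ norm is finite (as in \cite[Section 6.1]{Paper1}); since also $H-Q^{\la,\mu,\GGG}\in H^{2/m}$, the polynomial $Q^{\la,\mu,\GGG}$ lies in $H^{2/m}$ and therefore vanishes. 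You need to supply this step (or some substitute for it) before the $H^{2/m}$-to-square-function inequality in your chain is justified.
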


\subsection{Proof of Proposition \ref{propo1}}

We  decompose $\widetilde{\LL}_{\Omega}^{\sharp}$ further so that the Coifman-Meyer multiplier theorem is involved:
Setting
$$\widetilde{K}(\yyy):=\sum_{\mu\in\bbz: 2^{\mu-10}\le C_0\sqrt{mn}}K_{\mu}(\yyy)=\sum_{\mu\in\bbz: 2^{\mu-10}\le C_0\sqrt{mn}}\;\sum_{\ga\in\bbz}K_{\mu}^{\gamma}(\yyy),$$
$\widetilde{\mathcal{L}}^{\sharp}_{\Omega}\big(f_1,\dots,f_m\big)(x)$ is controlled by the sum of 
\begin{equation*}
T_{\widetilde{K}}^*\big(f_1,\dots,f_m\big)(x):= \sup_{\tau\in\bbz}\Big|\int_{|y|> 2^{-\tau}}\widetilde{K}(\yyy)\prod_{j=1}^{m}f_j(x-y_j)~d\,\yyy \Big|
\end{equation*}
and
\begin{equation*}
\mathfrak{T}_{K}^{**}\big(f_1,\dots,f_m\big)(x):= \sup_{\tau\in\bbz}\Big| \int_{(\bbrn)^m}  {K}^{**}_{\tau}(\yyy)    \prod_{j=1}^{m}f_j(x-y_j)  ~   d\,\yyy   \Big|, 
\end{equation*}
where 
\begin{equation*}
{K}^{**}_{\tau}(\yyy):=\Big(\sum_{\mu\in\bbz: 2^{\mu-10}\le C_0\sqrt{mn}}\; \sum_{\ga<\tau}K_{\mu}^{\ga}(\yyy)\Big) -\widetilde{K}(\yyy)\chi_{|\yyy|> 2^{-\tau}}.  
\end{equation*}

%$\widetilde K$ is the good part we promised above. 
%Noticing that  $K^\ga$ is supported in $\{\yyy:\ |\yyy|\le 2^{-\tau}\}$ when $\ga\ge\tau$, $K^\ga_\mu$ is  
%expected to be very small when $|\yyy|\gg 2^{-\tau}$. A finer argument making use of the vanishing moment property of $\Om$ shows that 
%$\II_{\mu,\tau}$ and $\JJ_{\mu,\tau}$ are  error terms as we conjectured. 

To obtain the boundedness of $T_{\widetilde{K}}^*$, 
we claim that $\widetilde{K}$ is an $m$-linear Calder\'on-Zygmund kernel with constant $C\Vert \Omega\Vert_{L^q(\mathbb{S}^{mn-1})}$ for $1<q< \infty$.
Indeed, it follows from (\ref{symbolest}) that
\begin{equation*}
\big| \partial^{\alpha} \widehat{\widetilde{K}}(\xxxi)\big|\le \sum_{\mu\in\bbz: 2^{\mu-10}\le C_0\sqrt{mn}}\big|\partial^{\alpha}\widehat{K_{\mu}}(\xxxi)\big|\lesssim \Vert \Omega\Vert_{L^q(\mathbb{S}^{mn-1})}|\xxxi|^{-|\alpha|}
\end{equation*} as the sum of $Q(\mu)$ over $\mu$ satisfying $2^{\mu-10}\le C_0\sqrt{mn}$ converges.
Then $\widetilde{K}$ satisfies the size and smoothness conditions for $m$-linear Calder\'on-Zygmund kernel with constant $C\Vert \Omega\Vert_{L^q(\mathbb{S}^{mn-1})}$, as mentioned in the proof of \cite[Proposition 6]{Gr_To}.
Since $\widetilde{K}$ is a Calder\'on-Zygmund kernel, 
Cotlar's inequality in \cite[Theorem 1]{GT-Indiana} yields that $T_{\widetilde{K}}^*$ is bounded on the full range of exponents with constant $C\Vert \Omega\Vert_{L^{q}(\mathbb{S}^{mn-1})}$.

To handle the boundedness of the operator $\mathfrak{T}_K^{**}$, we observe that the kernel ${K}_{\tau}^{**}$ can be written as
\begin{equation*}
{K}_{\tau}^{**}(\yyy)=\sum_{\mu\in\bbz:2^{\mu-10}\le C_0\sqrt{mn} }\; \Big( \sum_{\ga<\tau}K_{\mu}^{\ga}(\yyy)\chi_{|\yyy|\le 2^{-\tau}}-  \sum_{\ga\ge \tau}K_{\mu}^{\gamma}(\yyy)\chi_{|\yyy|>2^{-\tau}}\Big)
\end{equation*}
and thus 
\begin{equation*}
\mathfrak{T}_{K}^{**}\big(f_1,\dots,f_m\big)(x) \le \sup_{\tau\in\bbz}\; \sum_{\mu\in\bbz:2^{\mu-10}\le C_0\sqrt{mn} } \II_{\mu,\tau}(x)+\JJ_{\mu,\tau}(x)
\end{equation*} where
\begin{equation*}
\II_{\mu,\tau}(x):=  \sum_{\ga<\tau}\Big|\int_{|\yyy|<2^{-\tau}} K_{\mu}^{\ga}(\yyy) \prod_{j=1}^{m}f_j(x-y_j) \;d\,\yyy \Big|,
\end{equation*}
\begin{equation*}
\JJ_{\mu,\tau}(x):= \sum_{\ga \ge \tau}\Big|\int_{|\yyy|\ge 2^{-\tau}} K_{\mu}^{\ga}(\yyy) \prod_{j=1}^{m}f_j(x-y_j) \;d\,\yyy \Big|.
\end{equation*}
We claim that there exists $\epsilon>0$ such that
\begin{equation}\label{IJest}
\II_{\mu,\tau}+ \JJ_{\mu,\tau}\lesssim_{C_0,m,n} 2^{\epsilon \mu}\Vert \Omega\Vert_{L^1(\mathbb{S}^{mn-1})}\prod_{j=1}^{m}\mathcal{M}f_j  ~\text{ uniformly in ~} \tau\in\bbz
\end{equation}
for $\mu$ satisfying $2^{\mu-10}\le C_0\sqrt{mn}$, where we recall $\mathcal M$ is the Hardy-Littlewood maximal operator.
Then, using H\"older's inequality and the boundedness of $\mathcal{M}$, we obtain
$$
\big\Vert  \mathfrak{T}_{K}^{**}\big(f_1,\dots,f_m\big) \big\Vert_{L^p}\lesssim \Vert \Omega\Vert_{L^1(\mathbb{S}^{mn-1})} \Big\Vert \prod_{j=1}^{m}\mathcal{M}f_j  \Big\Vert_{L^p}\lesssim \Vert \Omega\Vert_{L^1(\mathbb{S}^{mn-1})}\prod_{j=1}^{m}\Vert f_j\Vert_{L^{p_j}}
$$
 for $1<p_1,\dots,p_m\le \infty$ and $0<p\le \infty$ satisfying $1/p=1/p_1+\dots+1/p_m$ as $\sum_{\mu: 2^{\mu-10}\le C_0\sqrt{mn}}2^{\epsilon \mu}$ converges.
Therefore, let us prove (\ref{IJest}).

Using (\ref{kernelcharacter}), we have
\begin{align*}
\II_{\mu,\tau}(x)&\lesssim \sum_{\ga<\tau}\int_{|\yyy|<2^{-\tau}}\int_{|\zzz|\approx 1}2^{\ga mn}2^{\mu mn}|\Omega(\zzz')|d\zzz \prod_{j=1}^{m}|f_j(x-y_j)|~d\,\yyy\\
 &\lesssim 2^{\mu mn}\Vert \Omega\Vert_{L^1(\mathbb{S}^{mn-1})}\frac{1}{2^{-\tau mn}}\int_{|\yyy|<2^{-\tau}} \prod_{j=1}^{m}|f_j(x-y_j)| ~d\,\yyy\\
 &\lesssim 2^{ \mu mn}\Vert \Omega\Vert_{L^1(\mathbb{S}^{mn-1})}\prod_{j=1}^{m}\mathcal{M}f_j(x)
\end{align*} as desired.

 In addition,
 \begin{align*}
 \JJ_{\mu,\tau}(x)\le \sum_{\ga\ge \tau}\int_{|\yyy|\ge 2^{-\tau}}2^{\ga mn}\Big|\int_{|\zzz|\approx 1}{\Phi_{\mu}(2^{\ga}\yyy-\zzz)\Omega(\zzz')}d\,\zzz \Big|\prod_{j=1}^{m}|f_j(x-y_j)| ~ d\,\yyy.
 \end{align*}
   Since $\Omega$ has vanishing mean, we have
\begin{align*}
&\Big|\int_{|\zzz|\approx 1}{\Phi_{\mu}(2^{\ga}\yyy-\zzz)\,\Omega(\zzz')}\,d\,\zzz \Big| \\
&\lesssim 2^{\mu (mn+1)}\int_{|\zzz|\approx 1}\int_0^1\big| \nabla \Phi (2^{\mu+\ga}\yyy-2^{\mu}t\zzz)\big|\;dt\; |\Omega(\zzz')|~ d\, \zzz. 
\end{align*}
Now we choose a constant $M$ such that $mn<M<mn+1$ and see that
\begin{align*}
\big| \nabla \Phi (2^{\mu+\ga}\yyy-2^{\mu}t\zzz)\big|&\lesssim_M \frac{1}{(1+|2^{\mu+\ga}\yyy-2^{\mu}t\zzz|)^M}\\
&\lesssim_{C_0,m,n,M} \frac{1}{(1+2^{\mu+\ga}|\yyy|)^M}\le \frac{1}{2^{M(\mu+\ga)}}\frac{1}{|\yyy|^{M}}
\end{align*}
as $|\zzz|\approx 1$, $0<t<1$, and $2^{\mu-10}\le C_0\sqrt{mn}$.
This yields that
\begin{align*}
 \JJ_{\mu,\tau}(x)&\lesssim 2^{\mu(mn+1-M)}\Vert \Omega\Vert_{L^1(\mathbb{S}^{mn-1})}\Big(\sum_{\ga\ge \tau}2^{-\ga(M- mn)}\Big)\\
  &\qq \qq \qq \qq \times \int_{|\yyy|\ge 2^{-\tau}}{\frac{1}{|\yyy|^M}\prod_{j=1}^{m}|f_j(x-y_j)|}~  d\, \yyy.
\end{align*}
Since $M>mn$,  the sum over $\ga\ge \tau$ converges to $2^{-\tau(M-mn)}$ and the integral over $|\yyy|\ge 2^{-\tau}$ is estimated by
\begin{align*}
&\sum_{l=0}^{\infty}\int_{2^{-\tau+l}\le |\yyy|<2^{-\tau+l+1}}{\frac{1}{|\yyy|^M}\prod_{j=1}^{m}|f_j(x-y_j)|}~ d\,\yyy\\
&\lesssim 2^{\tau(M-mn)}\sum_{l=0}^{\infty}2^{-l(M-mn)} \Big(\frac{1}{2^{(-\tau+l+1)mn}}\int_{|\yyy|\le 2^{-\tau+l+1}}\prod_{j=1}^{m}|f_j(x-y_j)| ~ d\,\yyy\Big)\\
&\lesssim 2^{\tau(M-mn)}\prod_{j=1}^{m}\mathcal{M}f_j(x).
\end{align*}
Finally, we have
\begin{equation*}
 \JJ_{\mu,\tau} \lesssim 2^{\mu(mn+1-M)}\Vert \Omega\Vert_{L^1(\mathbb{S}^{mn-1})}\prod_{j=1}^{m}\mathcal{M}f_j ,
\end{equation*} which completes the proof of (\ref{IJest}).

\subsection{Proof of Proposition \ref{propo2}}
The proof is based on the wavelet decomposition and the recent developments in  \cite{Paper1}.
Recalling that $\wh{K_{\mu}^0}\in L^{q'}$, we apply the wavelet decomposition (\ref{daubechewavelet})  to write 
\begin{equation*}
\wh{K_{\mu}^0}(\xxxi)=\sum_{\la\in\bbn_0}\sum_{\GGG\in\II^{\la}}\sum_{\kkk\in (\bbzn)^m}b_{\GGG,\kkk}^{\la,\mu}\Psi_{G_1,k_1}^{\la}(\xi_1)\cdots \Psi_{G_m,k_m}^{\la}(\xi_m)
\end{equation*}
where 
\begin{equation*}
b_{\GGG,\kkk}^{\la,\mu}:=\int_{(\bbrn)^m}{\wh{K_{\mu}^0}(\xxxi)\Psi_{\GGG,\kkk}^{\la}(\xxxi)}~ d\,\xxxi.
\end{equation*}
It is known in \cite{Paper1} that for any $0<\delta<1/q'$,
\begin{equation}\label{maininftyest}
\big\Vert \{b_{\GGG,\kkk}^{\la,\mu}\}_{\kkk}\big\Vert_{\ell^{\infty}}\lesssim      2^{-\delta {\mu}}2^{-\la (M+1+mn)} \Vert \Om\Vert_{L^q(\mathbb{S}^{mn-1})}
\end{equation} where $M$ is the number of vanishing moments of $\Psi_{\GGG}$. 
Moreover, it follows from the inequality (\ref{lqestimate}), the Hausdorff-Young inequality, and Young's inequality that
\begin{align}\label{mainlqest}
\big\Vert \{b_{\GGG,\kkk}^{\la,\mu}\}_{\kkk}\big\Vert_{\ell^{q'}}&\lesssim 2^{-\la mn (1/2-1/q')}\Vert \wh{K_{\mu}^0}\Vert_{L^{q'}}\lesssim 2^{-\la mn(1/q-1/2)}\Vert \Omega \Vert_{L^q(\mathbb{S}^{mn-1})}.
\end{align}

Now we may assume that $2^{\la+\mu-2}\le |\vec k|\le 2^{\la+\mu+2}$ due to the compact supports of $\wh{K_{\mu}^0}$ and $\Psi_{\GGG,\kkk}^{\la}$. 
In addition, by symmetry, it suffices to focus on the case $|k_1|\ge\cdots\ge |k_m|$.
Since $\wh{K_{\mu}^\ga}(\xxxi)=\wh{K_{\mu}^0}(\xxxi/2^\ga)$, the boundedness of $ \LL_{\Om,\mu}^{\sharp}$ is reduced to the inequality
\begin{align}\label{2mmainest}
&\bigg\Vert \sup_{\tau\in\bbz}\Big| \sum_{\la\in\bbn_0}\sum_{\GGG\in\II^{\la}} \sum_{\ga\in\bbz: \ga<\tau}\sum_{\kkk\in\UU^{\la+\mu}} b_{\GGG,\kkk}^{\la,\mu}\prod_{j=1}^{m} L_{G_j,k_j}^{\la,\ga}f_j\Big|\bigg\Vert_{L^{2/m}}\nonumber\\
&\lesssim 2^{-\epsilon_0 \mu}\Vert \Omega\Vert_{L^q(\mathbb{S}^{mn-1})}\prod_{j=1}^{m}\Vert f_j\Vert_{L^2}
\end{align} 
where the operators $L_{G_j,k_j}^{\la,\ga}$ and the set $\UU^{\la+\mu}$ are defined as in (\ref{lgkest}) and (\ref{uset}). 
We split $\UU^{\la+{\mu}}$ into $m$ disjoint subsets $\UU_l^{\la+{\mu}}$ ($1\le l\le m$) as before such that
for $k\in \UU^{\la+\mu}_l$ we have 
$$|k_1|\ge\cdots \ge |k_l|\ge 2C_0\sqrt n\ge |k_{l+1}|\ge\cdots\ge |k_m|.$$
Then the left-hand side of (\ref{2mmainest}) is estimated by
\begin{equation*}
\bigg( \sum_{l=1}^{m}\sum_{\la\in\bbn_0}\sum_{\GGG\in\II^{\la}} \bigg\Vert \sup_{\tau\in\bbz}\Big| \sum_{\ga\in\bbz:\ga<\tau} \TT_{\GGG,l}^{\la,\ga,\mu}(f_1,\dots,f_m) \Big|     \bigg\Vert_{L^{2/m}}^{2/m}\bigg)^{m/2}
\end{equation*} 
where $\TT_{\GGG,l}^{\la,\ga,\mu}$ is defined by
\begin{equation*}
\TT_{\GGG,l}^{\la,\ga,\mu}\big(f_1,\dots,f_m\big):=\sum_{\kkk\in\UU_l^{\la+{\mu}}}b_{\GGG,\kkk}^{\la,\mu} \Big(\prod_{j=1}^{m}L_{G_j,k_j}^{\la,\ga}f_j \Big).
\end{equation*}

We claim that for each $1\le l\le m$ there exists $\epsilon_0, M_0>0$ such that
\begin{align}\begin{split}\label{2mmaingoal}
 \bigg\Vert \sup_{\tau\in\bbz}\Big| &\sum_{\ga\in\bbz:\ga<\tau} \TT_{\GGG,l}^{\la,\ga,\mu}(f_1,\dots,f_m) \Big|     \bigg\Vert_{L^{2/m}} \\
&\lesssim 2^{-\epsilon_0 \mu_0}2^{-\la M_0}\Vert \Omega\Vert_{L^q(\mathbb{S}^{mn-1})}\prod_{j=1}^{m}\Vert f_j\Vert_{L^2},
\end{split}\end{align} 
 which concludes (\ref{2mmainest}). Therefore it remains to prove \eqref{2mmaingoal}.

\subsubsection*{Proof of (\ref{2mmaingoal})}
When $2\le l\le m$, we apply (\ref{2mainprop}) with $2<q'<\frac{2m}{m-1}$, 
along with  (\ref{maininftyest}), and (\ref{mainlqest}) to obtain  
\begin{align}
&\bigg\Vert \sup_{\tau\in\bbz}\Big| \sum_{\ga\in\bbz:\ga<\tau}\TT_{\GGG,l}^{\la,\ga,\mu}\big(f_1,\dots,f_m\big) \bigg\Vert_{L^{2/m}}\le \bigg\Vert  \sum_{\ga\in\bbz} \big| \TT_{\GGG,l}^{\la,\ga,\mu}\big(f_1,\dots,f_m\big) \big| \bigg\Vert_{L^{2/m}}\notag\\
&\lesssim \big\Vert \big\{ b_{\GGG,\kkk}^{\la,\mu}\big\}_{\kkk}\big\Vert_{\ell^{\infty}}^{1-\frac{(m-1)q'}{2m}} \big\Vert \big\{ b_{\GGG,\kkk}^{\la,\mu}\big\}_{\kkk}\big\Vert_{\ell^{q'}}^{\frac{(m-1)q'}{2m}} 2^{\la mn/2}(\la+1)^{l/2}{\mu}^{l/2}\prod_{j=1}^{m}\Vert f_j\Vert_{L^2}\notag\\
&\lesssim \Vert \Om\Vert_{L^q(\mathbb{S}^{mn-1})}2^{-\delta {\mu}(1-\frac{(m-1)q'}{2m})}{\mu}^{m/2}2^{-\la C_{M,m,n,q}}(\la+1)^{m/2}\prod_{j=1}^{m}\Vert f_j\Vert_{L^2} ,\notag  %\label{e09024}
\end{align} 
where 
$$C_{M,m,n,q}:=(M+1+mn)(1-\frac{(m-1)q'}{2m})+mn(1/q-1/2)\frac{(m-1)q'}{2m}-\frac{mn}{2}.$$
Here we used  the fact that $\frac{l-1}{2l}\le \frac{m-1}{2m}$ for $l\le m$.
  Then \eqref{2mmaingoal} follows from choosing $M$ sufficiently large so that $C_{M,m,n,q}>0$ since $1-\frac{(m-1)q'}{2m}>0$.

Now let us prove (\ref{2mmaingoal}) for $l=1$.
In this case, we first see the estimate
\begin{equation}\label{keykeyest}
\Big\Vert \Big(  \sum_{\ga\in\bbz}\big|     \TT_{\GGG,1}^{\la,\ga,\mu}(f_1,\dots,f_m)   \big|^2\Big)^{1/2}\Big\Vert_{L^{2/m}}  \lesssim 2^{-\epsilon_0\mu} 2^{-M_0\la} \Vert \Omega\Vert_{L^q(\mathbb{S}^{mn-1})}\prod_{j=1}^{m}\Vert f_j\Vert_{L^2}
\end{equation}
for some $\epsilon_0, M_0>0$, which can be proved, as in \cite[Section 6]{Paper1}, by using (\ref{1mainprop}) and (\ref{maininftyest}).

Choose a Schwartz function $\Gamma$ on $\bbrn$ whose Fourier transform is supported in the ball $ \{\xi\in\bbrn: |\xi|\le 2\}$ and is equal to $1$ for $|\xi|\le 1$, and define $\Ga_k:=2^{kn}\Ga(2^k\cdot)$ so that
$\textup{Supp}(\wh{\Ga_k})\subset \{\xi\in\bbrn: |\xi|\le 2^{k+1}\}$ and $\wh{\Ga_k}(\xi)=1$ for $|\xi|\le 2^k$.

Since the Fourier transform of $\TT_{\GGG,1}^{\la,\ga,\mu}(f_1,\dots,f_m)$ is supported in the set 
$\big\{  \xi\in\bbrn: 2^{\ga+\mu-5}\le |\xi|\le 2^{\ga+\mu+4}   \big\}$, we can write
\begin{equation*}
\sum_{\ga\in\bbz: \ga<\tau}\TT_{\GGG,1}^{\la,\ga,\mu}(f_1,\dots,f_m)=\Ga_{\tau+\mu+3}\ast \Big( \sum_{\ga\in\bbz: \ga<\tau}\TT_{\GGG,1}^{\la,\ga,\mu}(f_1,\dots,f_m)\Big)
\end{equation*}
and then split the right-hand side into
 \begin{align*}
& \Ga_{\tau+\mu+3}\ast  \Big( \sum_{\ga\in\bbz}\TT_{\GGG,1}^{\la,\ga,\mu}(f_1,\dots,f_m)\Big)-\Ga_{\tau+\mu+3}\ast  \Big( \sum_{ \ga\in\bbz :\ga\ge \tau}\TT_{\GGG,1}^{\la,\ga,\mu}(f_1,\dots,f_m)\Big).
\end{align*}
Due to the Fourier support conditions of $\Gamma_{\tau+\mu+3}$ and $\TT_{\GGG,1}^{\la,\ga,\mu}(f_1,\dots,f_m)$, the sum in the second term can be actually taken over $\tau\le \ga\le \tau+9$.
Therefore, the left-hand side of (\ref{2mmaingoal}) is controlled by the sum of
\begin{equation}\label{DefI}
I:=\bigg\Vert \sup_{\nu\in\bbz}\Big| \Gamma_{\nu}\ast \Big(\sum_{\ga\in\bbz}{  \TT_{\GGG,1}^{\la,\ga,\mu}(f_1,\dots,f_m)   } \Big)\Big|      \bigg\Vert_{L^{2/m}}
\end{equation}
and
\begin{equation}\label{DefII}
II:=\sum_{\ga=0}^{9}\Big\Vert \sup_{\tau\in\bbz}\big| \Ga_{\tau+\mu+3}\ast T_{\GGG,1}^{\la,\tau+\ga,\mu}(f_1,\dots,f_m)\big|\Big\Vert_{L^{2/m}}.
\end{equation}

First of all, when $0\le \gamma\le 9$,
the Fourier supports of both $\Gamma_{\tau+\mu+3}$ and $T_{\GGG,1}^{\lambda,\tau+\gamma,\mu}(f_!,\dots,f_m)$ are $\{\xi\in \bbrn: |\xi|\sim 2^{\tau+\mu}\}$. This implies that
for any $0<r<1$,
\begin{align*}
&\big| \Ga_{\tau+\mu+3}\ast T_{\GGG,1}^{\la,\tau+\ga,\mu}(f_1,\dots,f_m)(x)\big|\\
&\lesssim 2^{(\tau+\mu)(n/r-n)}\Big( \int_{\bbrn}  \big| \Ga_{\tau+\mu+3}(x-y)\big|^r \big| T_{\GGG,1}^{\la,\tau+\ga,\mu}(f_1,\dots,f_m)(y)\big|^r  ~dy\Big)^{1/r}\\
&\lesssim \Big(\mathcal{M}\big(|T_{\GGG,1}^{\la,\tau+\ga,\mu}(f_1,\dots,f_m)|^r\big)(x)\Big)^{1/r}
\end{align*}
where the Nikolskii inequality (see \cite[Proposition 1.3.2]{Tr1983}) is applied in the first inequality.
Setting $0<r<2/m$, and using the maximal inequality for $\mathcal{M}$ and the embedding $\ell^2\hookrightarrow \ell^{\infty}$
we obtain
\begin{align}\label{peetreargument}
II&\lesssim \big\Vert \sup_{\tau\in\bbz}\big|  T_{\GGG,1}^{\la,\tau,\mu}(f_1,\dots,f_m)   \big| \big\Vert_{L^{2/m}}\\
 &\le \Big\Vert \Big( \sum_{\ga\in\bbz}\big|     \TT_{\GGG,1}^{\la,\ga,\mu}(f_1,\dots,f_m)   \big|^2\Big)^{1/2}\Big\Vert_{L^{2/m}}. \nonumber
\end{align}
Then the $L^{2/m}$ norm is bounded by the right-hand side of (\ref{2mmaingoal}), thanks to (\ref{keykeyest}). This completes the estimate for $II$ defined in \eqref{DefII} and 
we turn our attention to $I$ defined in \eqref{DefI}.

 In the sequel we will make use of the following inequality: 
 if $\widehat{g_{\ga}}$ is supported on $\{\xi\in \rn : C^{-1} 2^{\ga+\mu}\leq |\xi|\leq C2^{\ga+\mu}\}$ for some $C>1$ and $\mu\in \bbz$, then
\begin{equation}\label{marshallest}
\Big\Vert \Big\{ \Phi^{(1)}_j\!\ast\! \Big(\sum_{\ga\in\bbz}{g_{\ga}}\!\Big)\!\Big\}_{\! j\in\mathbb{Z}}\Big\Vert_{L^p(\ell^q)}\lesssim_{C} \big\Vert \big\{ g_j\big\}_{j\in\mathbb{Z}}\big\Vert_{L^p(\ell^q)} \q \text{uniformly in }~\mu
\end{equation}  for $0<p<\infty$. The proof of (\ref{marshallest}) is elementary and standard, so it is omitted here; see \cite[(13)]{Gr_He_Ho} and  \cite[Theorem 3.6]{Ya}  for  related arguments.

To obtain the bound of $I$, we note that
\begin{equation*}
I \approx \Big\Vert \sum_{\ga\in\bbz}{     \TT_{\GGG,1}^{\la,\ga,\mu}(f_1,\dots,f_m)       }\Big\Vert_{H^{2/m}}
\end{equation*} 
 where $H^{2/m}$ is the Hardy space. We refer to \cite[Corollary 2.1.8]{MFA} for the above estimate.
Then, using the Littlewood-Paley theory for Hardy space (see for instance \cite[Theorem 2.2.9]{MFA}) and (\ref{marshallest}), there exists a unique polynomial $Q^{\la,\mu,\GGG}(x)$ such that
\begin{align}\label{3mainest}
 \Big\Vert \sum_{\ga\in\bbz}{     \TT_{\GGG,1}^{\la,\ga,\mu}(f_1,\dots,f_m)       }       -Q^{\la,\mu,\GGG} \Big\Vert_{H^{2/m}}\notag &\lesssim  \Big\Vert \Big( \sum_{\ga\in\bbz}\big|     \TT_{\GGG,1}^{\la,\ga,\mu}(f_1,\dots,f_m)   \big|^2\Big)^{1/2}\Big\Vert_{L^{2/m}}\nonumber\\
 & \lesssim 2^{-\epsilon_0\mu} 2^{-M_0\la} \Vert \Omega\Vert_{L^q(\mathbb{S}^{mn-1})}\prod_{j=1}^{m}\Vert f_j\Vert_{L^2}
 \end{align} where (\ref{keykeyest}) is applied.
Furthermore, 
\begin{align*}
&\Big\Vert    \sum_{\ga\in\bbz}{     \TT_{\GGG,1}^{\la,\ga,\mu}(f_1,\dots,f_m)       }  \Big\Vert_{H^{2/m}}\\
&\approx \bigg\Vert  \sup_{\nu\in\bbz}\Big|\Ga_{\nu}\ast\Big(  \sum_{\ga\in\bbz}{     \TT_{\GGG,1}^{\la,\ga,\mu}(f_1,\dots,f_m)       }  \Big)\Big|\bigg\Vert_{L^{2/m}}\\
&=\bigg\Vert  \sup_{\nu\in\bbz}\Big|\Ga_{\nu}\ast\Big(  \sum_{\ga\in\bbz: \ga\le \nu-\mu+5}{     \TT_{\GGG,1}^{\la,\ga,\mu}(f_1,\dots,f_m)       }  \Big)\Big|\bigg\Vert_{L^{2/m}}\\
&\lesssim \bigg\Vert \sup_{\nu\in\bbz} \Big| \sum_{\ga\in\bbz: \ga\le \nu-\mu+5}{      \TT_{\GGG,1}^{\la,\ga,\mu}(f_1,\dots,f_m)     }\Big|\bigg\Vert_{L^{2/m}}\\
&\le \Big\Vert \sum_{\ga\in\bbz}{  \big|    \TT_{\GGG,1}^{\la,\ga,\mu}(f_1,\dots,f_m)  \big|   }\Big\Vert_{L^{2/m}}
\end{align*} 
where the argument that led to (\ref{peetreargument}) is applied in the first inequality.
As we have discussed in \cite[Section 6.1]{Paper1},   this quantity is finite for all Schwartz functions $f_1,\dots,f_m$.
Accordingly, we have 
$$\sum_{\ga\in\bbz}{     \TT_{\GGG,1}^{\la,\ga,\mu}(f_1,\dots,f_m)       }       -Q^{\la,\mu,\GGG}\in H^{2/m}$$
and 
$$\sum_{\ga\in\bbz}{     \TT_{\GGG,1}^{\la,\ga,\mu}(f_1,\dots,f_m)       }      \in H^{2/m},$$
and thus $Q^{\la,\mu,\GGG}=0$.
Now it follows from (\ref{3mainest}) that 
\begin{equation*}
I \lesssim 2^{-\epsilon_0\mu} 2^{-M_0\la} \Vert \Omega\Vert_{L^q(\mathbb{S}^{mn-1})}\prod_{j=1}^{m}\Vert f_j\Vert_{L^2},
\end{equation*}
as expected. This completes the proof of (\ref{2mmaingoal}).

\section{Proof of Theorem \ref{application4}}\label{pfapp4}
Let $\mu_0$ be the smallest integer satisfying $2^{{\mu}_0-3}>C_0\sqrt{mn}$ and
 $$\wh{\Theta^{(m)}_{{\mu}_0-1}}(\xxxi):=1-\sum_{{\mu}={\mu}_0}^{\infty}{\wh{\Phi_{\mu}^{(m)}}(\xxxi)}.$$
Clearly, 
\begin{equation*}
\wh{\Theta^{(m)}_{{\mu}_0-1}}(\xxxi)+\sum_{{\mu}={\mu}_0}^{\infty}{\wh{\Phi_{\mu}^{(m)}}(\xxxi)}=1
\end{equation*}
and thus we can write
\begin{equation*}
\si(\xxxi)=\wh{\Theta_{{\mu}_0-1}^{(m)}}(\xxxi)\si(\xxxi)+\sum_{{\mu}={\mu}_0}^{\infty}{\wh{\Phi_{\mu}^{(m)}}(\xxxi)\si(\xxxi)}=:\si_{{\mu}_0-1}(\xxxi)+\sum_{{\mu}={\mu}_0}^{\infty}{\si_{\mu}(\xxxi)}.
\end{equation*}
Note that $\si_{{\mu}_0-1}$ is a compactly supported smooth function and thus the corresponding maximal multiplier operator $\mathscr{M}_{\si_{{\mu}_0-1}}$, defined by
\begin{align*}
&\mathscr{M}_{\si_{{\mu}_0-1}}\big(f_1,\dots,f_m\big)(x)\\
&:=\sup_{\nu \in\bbz}\Big|\int_{(\bbrn)^m}{\si_{{\mu}_0-1}(2^{\nu} \xxxi)\Big( \prod_{j=1}^{m}\wh{f_j}(\xi_j)\Big) e^{2\pi i\langle x,\sum_{j=1}^{m}\xi_j \rangle}}d\xxxi \Big|,
\end{align*}
is bounded by a constant multiple of 
$
\mathcal{M}f_1(x)\cdots\mathcal{M}f_m(x)
 $ 
 where $\mathcal{M}$ is the Hardy-Littlewood maximal operator on $\bbrn$ as before.
Using H\"older's inequality and the $L^2$-boundedness of $\mathcal{M}$, we can prove
\begin{equation*}
\big\Vert \mathscr{M}_{\si_{{\mu}_0-1}}(f_1,\dots,f_m) \big\Vert_{L^{2/m}}\lesssim \prod_{j=1}^{m}\Vert f_j\Vert_{L^2}.
\end{equation*}

It remains to show that
\begin{equation}\label{lefttoshow}
\Big\Vert \sum_{{\mu}={\mu}_0}^{\infty}\mathscr{M}_{\si_{\mu}}(f_1,\dots,f_m)\Big\Vert_{L^{2/m}}\lesssim \prod_{j=1}^{m}\Vert f_j\Vert_{L^2}.
\end{equation}
Using the decomposition (\ref{daubechewavelet}), write
\begin{equation}\label{sigmajdef}
\si_{\mu}(\xxxi)=\sum_{\la\in\bbn_0}\sum_{\GGG\in\II^{\la}}\sum_{\kkk\in (\bbzn)^m}{b_{\GGG,\kkk}^{\la,\mu}\Psi_{G_1,k_1}^{\la}(\xi_1)\cdots\Psi_{G_m,k_m}^{\la}(\xi_m)}
\end{equation}
 where 
 \begin{equation*}
 b_{\GGG,\kkk}^{\la,\mu}:=\int_{(\bbrn)^m}{\si_{\mu}(\xxxi)\Psi_{\GGG,\kkk}^{\la}(\xxxi)}d\xxxi.
 \end{equation*} 
 
 Let $M:=\Big[ \frac{(m-1)n}{2}\Big]+1$ and choose $1<q<\frac{2m}{m-1}$ such that
 \begin{equation}\label{choiceq}
 \frac{(m-1)n}{2}<\frac{mn}{q}<\min{(a,M)}.
 \end{equation}
In view of (\ref{lqestimate}), we have
 \begin{align}\label{bqpointbound}
 \big\Vert \{b_{\GGG,\kkk}^{\la,\mu}\}_{\kkk\in (\bbzn)^m}\big\Vert_{\ell^q} &\lesssim 2^{-\la (M-mn/q+mn/2)}\Vert \si_{\mu}\Vert_{L_M^q((\bbrn)^m)}\nonumber\\
  & \lesssim 2^{-\la (M-mn/q+mn/2)} 2^{-{\mu}(a-mn/q)}
 \end{align} where the assumption (\ref{givenassumption}) is applied in the last inequality.

We observe that if ${\mu}\ge {\mu}_0$, then $b_{\GGG,\kkk}^{\la,\mu}$ vanishes unless $2^{\la+{\mu}-2}\le |\kkk|\le 2^{\la+{\mu}+2}$
 due to the compact supports of $\si_{\mu}$ and $\Psi_{\GGG,\kkk}^{\la}$, which allows us to replace the sum over $\kkk\in (\bbzn)^m$ in (\ref{sigmajdef}) by the sum over $2^{\la+{\mu}-1}\le |\kkk|\le 2^{\la+{\mu}+1}$. 
Moreover, we may consider only the case $|k_1|\ge \cdots \ge |k_m|$ as in the previous section.
Therefore, in the rest of the section, we assume
\begin{align*}
\si_{\mu}(\xxxi)&=\sum_{\la\in\bbn_0}\sum_{\GGG\in \II^{\la}}\sum_{\kkk\in \UU^{\la+{\mu}}}{    b_{\GGG,\kkk}^{\la,\mu}\Psi_{G_1,k_1}^{\la}(\xi_1)\cdots\Psi_{G_m,k_m}^{\la}(\xi_m) }\\
&=\sum_{l=1}^{m}\sum_{\la\in\bbn_0}\sum_{\GGG\in \II^{\la}}\sum_{\kkk\in \UU_l^{\la+{\mu}}}{    b_{\GGG,\kkk}^{\la,\mu}\Psi_{G_1,k_1}^{\la}(\xi_1)\cdots\Psi_{G_m,k_m}^{\la}(\xi_m) }\\
&=:\sum_{l=1}^{m}\sum_{\la\in\bbn_0}\sum_{\GGG\in\II^{\la}}\si_{\mu,l}^{\la,\GGG}(\xxxi)
\end{align*}
where the sets $\UU^{\la+\mu}$, $\UU_l^{\la+\mu}$ are defined as before.
Then the left-hand side of (\ref{lefttoshow}) can be controlled by
\begin{equation}\label{boundgoal}
\Big(\sum_{l=1}^{m}\sum_{{\mu}={\mu}_0}^{\infty}\sum_{\la\in\bbn_0}\sum_{\GGG\in\II^{\la}}\big\Vert \mathscr{M}_{\si_{{\mu},l}^{\la,\GGG}}(f_1,\dots,f_m)\big\Vert_{L^{2/m}}^{2/m} \Big)^{m/2}.
\end{equation}

Now we claim that 
\begin{align}\label{app4claim}
&\big\Vert \mathscr{M}_{\si_{{\mu},l}^{\la,\GGG}}(f_1,\dots,f_m)\big\Vert_{L^{2/m}}\nonumber\\
&\lesssim 2^{-\la (M-mn/q)}(\la+1)^{l/2}2^{-{\mu}(a-mn/q)}{\mu}^{l/2}\prod_{j=1}^{m}\Vert f_j\Vert_{L^2}.
\end{align}
Then (\ref{boundgoal}) is less than a constant multiple of 
$\prod_{j=1}^{m}\Vert f_j\Vert_{L^2}$ as desired, due to the choice of $q$ in (\ref{choiceq}).

In order to prove (\ref{app4claim}), we use the estimates (\ref{1mainprop}) and (\ref{2mainprop}). 
We first rewrite
\begin{equation*}
\mathscr{M}_{\si_{{\mu},l}^{\la,\GGG}}\big(f_1,\dots,f_m\big)(x)=\sup_{\ga\in\bbz}\Big| \sum_{\kkk\in \UU_l^{\la+{\mu}}}{b_{\GGG,\kkk}^{\la,\mu} \Big( \prod_{j=1}^{m}L_{G_j,k_j}^{\la,\ga}f_i(x)\Big) }\Big|
\end{equation*}
where $L_{G,k}^{\la,\ga}$ is defined as in (\ref{lgklg}).

When $l=1$, applying the embeddings $\ell^2\hookrightarrow \ell^{\infty}$, $\ell^q\hookrightarrow \ell^{\infty}$, and (\ref{1mainprop}), the left-hand side of (\ref{app4claim}) is less than
\begin{align*}
&\bigg\Vert \bigg(\sum_{\ga\in\bbz} \Big| \sum_{\kkk\in\UU_{1}^{\la+{\mu}}}{b_{\GGG,\kkk}^{\la,\mu}   \prod_{j=1}^{m}{L_{G_j,k_j}^{\la,\ga}f_j}             }    \Big|^2 \bigg)^{1/2}\bigg\Vert_{L^{2/m}}\\
&\lesssim \big\Vert \{  b_{\GGG,\kkk}^{\la,\mu}  \}_{\kkk\in (\bbzn)^m}\big\Vert_{\ell^{q}} \mu^{1/2}2^{\la mn/2}(\la+1)^{1/2}\prod_{j=1}^{m}\Vert f_j\Vert_{L^2}\\
&\lesssim 2^{-\la(M-mn/q)}(\la+1)^{1/2} 2^{-{\mu}(a-mn/q)} {\mu}^{1/2}\prod_{j=1}^{m}\Vert f_j\Vert_{L^2}
\end{align*}
where (\ref{bqpointbound}) is applied in the last inequality.

For the case $2\le l\le m$, we can bound the left-hand side of (\ref{app4claim}) by
\begin{align*}
&\bigg\Vert \sum_{\ga\in\bbz}\Big| \sum_{\kkk\in\UU_{l}^{\la+{\mu}}} b_{\GGG,\kkk}^{\la,\mu}  \prod_{j=11}^{m} L_{G_j,k_j}^{\la,\ga}f_j   \Big| \bigg\Vert_{L^{2/m}}\\
&\lesssim \big\Vert \{b_{\GGG,\kkk}^{\la,\mu}\}_{\kkk\in (\bbzn)^m}\big\Vert_{\ell^{q}} {\mu}^{l/2}2^{\la mn/2}(\la+1)^{l/2}\prod_{j=1}^{m}\Vert f_j\Vert_{L^2}.
\end{align*}
Here, we used the inequality (\ref{2mainprop}) and the embedding $\ell^q\hookrightarrow \ell^{\infty}$.
Then the preceding expression is estimated by
\begin{equation*}
2^{-\la (M-mn/q)}(\la+1)^{l/2}2^{-{\mu} (a-mn/q)}{\mu}^{l/2}\prod_{j=1}^{m}\Vert f_j\Vert_{L^2}
\end{equation*}
in view of (\ref{bqpointbound}). This completes the proof of (\ref{app4claim}).

%%%%%%%%%%%%%%%%%%%%%%%%
 \section{Concluding remarks}

As of this writing, we are uncertain how to extend  Theorem~\ref{application4} 
in the non-lacunary case. A new ingredient may be necessary to accomplish this. \\

We have addressed the boundedness of several multilinear and maximal multilinear
operators at the initial point $L^2\times \cdots \times L^2\to L^{2/m}$. Our future 
investigation  related to this project has two main directions: 
(a) to extend this initial estimate to many other operators, such as the   
general maximal multipliers considered in \cite{Gr_He_Ho2020, Ru}, and (b) to obtain 
  $L^{p_1}\times \cdots \times L^{p_m}\to L^{p}$ bounds for all of these operators in the 
  largest possible range of exponents possible. Additionally, one could consider the 
  study of related endpoint estimates. We hope to achieve this goal in future 
  publications.

%\bibliographystyle{apalike}
%\bibliography{HeBib}

%\printindex

\end{document}